\numberwithin{equation}{section}
\newcommand{\midb}{\;\middle|\;}
\begin{document}

\title{The Scaling Limit of Superreplication Prices with Small Transaction Costs in the Multivariate Case}

\author{Peter Bank\thanks{TU Berlin, Dept.\ of Mathematics, \textit{bank@math.tu-berlin.de}. Corresponding author.} \and 
        Yan Dolinsky\thanks{Hebrew University,  Dept.\ of Statistics, \textit{yan.dolinsky@mail.huji.ac.il}} \and
		Ari-Pekka Perkki\"o\thanks{TU Berlin, Dept.\ of Mathematics, \textit{perkkioe@math.tu-berlin.de}}\ \thanks{All authors are grateful to the Einstein Foundation for the financial support through its research project on ``Game options and markets with frictions''.}}


\date{\today}

\maketitle

\begin{abstract}
Kusuoka [{\em Limit Theorem on Option Replication Cost with Transaction Costs,} Ann. Appl. Probab. {\bf 5}, 198--221, (1995).] showed how to obtain non-trivial scaling limits of superreplication prices in discrete-time models of a single risky asset which is traded at properly scaled proportional transaction costs. This article extends the result to a multi-variate setup where the investor can trade in several risky assets. The $G$-expectation describing the limiting price involves models with a volatility range around the frictionless scaling limit that depends not only on the transaction costs coefficients but also on the chosen complete discrete-time reference model.
\end{abstract}

\noindent\textbf{Keywords.} super--replication; transaction costs; complete model; limit theorems
\newline
\newline
\noindent\textbf{AMS subject classification codes.} 91G10; 91G20; 60F05; 60H30

\section{Introduction}\label{intro}

It is well-known that in typical continuous-time models with proportional transaction costs the super-replication costs of a contingent claim become prohibitively high and often lead to practically rather unhelpful buy-and-hold hedging strategies; see \cite{DC} and \cite{BT,CPT,GRS,JLR,LS,SSC}). Roughly speaking, these high super-replication costs are due to the fact that the hedge has to stay on the ``safe side'' even in those (typically highly unlikely, but still relevant) scenarios where the underlying's price exhibits extreme movements to which the hedge cannot adjust fast enough under these market frictions. Of course, in simple discrete-time models such as the Cox--Ross--Rubinstein (CRR) binomial model of \cite{CRR} such movements are not possible and, in fact, super-replication becomes a more palatable concept there, even under proportional transaction costs. Yet, if, to improve tractability, one rescales the price-dynamics to obtain a continuous--time model such as the geometric Brownian motion of Black and Scholes, one finds oneself in the same predicament as in the continuous--time setting discussed above.

In his seminal paper \cite{K} Kusuoka showed, however, that if, along with the price dynamics, one also rescales the transaction costs in a CRR model, a meaningful scaling limit of super-replication prices emerges that can be described as the value of a volatility control problem on Wiener space, nowadays known as Peng's $G$-expectation (\cite{P1,P2}). 

The main result of the present paper is the extension of Kusuoka's results to a multivariate setting where the investor can trade in a number of risky assets. As a substitute for the one dimensional CRR model, we use He's \cite{H} multivariate models of a complete frictionless financial market. We introduce rescaled transaction costs and consider the super-replication prices of a contingent claim with continuous, but possibly path-dependent payoff exhibiting polynomial growth. Theorem~\ref{thm2.1} describes the scaling limit of these prices by the payoff's $G$-expectation under a family of probabilities. In line with Kusuoka's one dimensional result, this family is given in terms of a collection of stochastic volatility models where volatility is constrained to lie in a certain neighborhood of the (constant) reference volatility of the scaling limit in the CRR-model. Interestingly, this neighborhood depends not only on the size of transaction costs coefficients, but also on the choice of multivariate frictionless reference model.

\section{The problem setup and the main result}\label{sec:2}

\subsection{The reference complete market model and proportional transaction costs}

As in the seminal approach of~\cite{K}, we will work with a discrete-time complete market model and take its scaling limit. In a setting with several assets, a suitable construction of such a model is given in~\cite{H} where asset prices are driven by a random walk with i.i.d. increments $\xi_1, \xi_2,\dots$, uniformly distributed under the probability $\mathbb{P}$ on $(\Omega,\mathcal{F})$ on $d+1$ possible values $v_1,\dots, v_{d+1} \in \mathbb{R}^{d}$. The row vectors $v_1,\dots,v_{d+1}$ are chosen as vertices of a regular simplex so that they are affinely independent with
\begin{align}
|v_i|&=d,\quad \langle v_i,v_j\rangle=-1,\quad i\ne j,\label{2.0+}\\
 \sum_{j=1}^{d+1} v_j&=0,\label{2.1+}\\
\sum_{j=1}^{d+1} v^{'}_j v_j&=(d+1) I,\label{2.2+}
\end{align}
where $\langle\cdot,\cdot\rangle$ denotes the standard scalar product of $\mathbb{R}^d$ and $I$ denotes the identity matrix.

We assume the availability of a bank account which, for simplicity, bears no interest. Asset price dynamics $S^{(n)}$ in an $n$-period model will be defined using a $d\times d$-dimensional regular volatility matrix $\sigma$ which specifies price sensitivities with respect to the drivers $\xi_1,\xi_2,\dots$:
\begin{align}\label{2.3}
\begin{split}
S^{n,i}_k \set s_i \prod_{m=1}^k\left(1+
\sqrt{\frac{1}{n}}\langle \sigma_i,
\xi_m\rangle\right), \quad k=1,\dots,n,
\end{split}
\end{align}
where $s=(s_1,...,s_d)\in (0,\infty)^d$ denotes the asset prices at time 0. 

The regularity of the volatility matrix $\sigma$ ensures that the filtration
\[
\mathcal{F}_k \set \sigma\{\xi_k| m = 1,\dots, k\}, \quad k=0,1,\dots,n,
\]
coincides with the one generated by the asset prices. In conjunction with the affine independence of $v_1,\dots,v_{d+1}$ and~\eqref{2.1+}, it also implies that the measure $\mathbb{P}$ is the only martingale measure for $S^{(n)}$ on $\mathcal{F}_n$.

We assume that, when changing his position in the risky assets, the investor has to trade through his bank account, i.e., bartering directly between the risky assets is not possible. Transaction costs are then described by constants $\kappa^+_i, \kappa^-_i \geq 0$ measuring what fraction of a purchase or sale, respectively, of asset number $i=1,\dots,d$ incurs as transaction costs. Specifically, if $\pi=(x,\gamma)$ describes the investor's strategy where $x$ is
his initial net worth in cash and where $\gamma:=\{(\gamma^{(1)}_k,...,\gamma^{(d)}_k)\}_{k=0,\dots,n-1}$ is an adapted process with values in $\mathbb{R}^d$ describing the investor's holdings in each asset, his net worth in cash will end up as
\begin{align*}
Z^{\pi}_n  =x&+\sum_{j=1}^d \left(
(1-\kappa^{-}_j/\sqrt{n})S^{n,j}_n\max(0,\gamma^{(j)}_{n-1})\right.\\&\qquad\qquad\qquad\left.-
(1+\kappa^{+}_j/\sqrt{n})S^{n,j}_n\max(0,-\gamma^{(j)}_{n-1})
\right)\\ &+
\sum_{j=1}^d\sum_{i=0}^{n-1}\left((1-\kappa^{-}_j/\sqrt{n})S^{n,j}_i\max(0,\gamma^{(j)}_{i-1}-\gamma^{(j)}_{i})
            \right.\\&\qquad\qquad\qquad\left.- 
(1+\kappa^{+}_j/\sqrt{n})S^{n,j}_i\max(0,\gamma^{(j)}_{i}-\gamma^{(j)}_{i-1})\right),
\end{align*}
where we set $\gamma_{-1}\set 0$.

\subsection{Superreplication prices and their scaling limits}

The main result of this paper is the description of the scaling limit of superreplication prices in the above model. The derivatives we cover are given by continuous functionals $F$ on the space $C_d$ of continuous paths $f:[0,1] \to \mathbb{R}^d$ which satisfy a polynomial growth condition: 
\begin{align}\label{2.5}
\begin{split}
|F(f)|\leq \const (1+||f||^p) \quad (f\in C_d)
\end{split}
\end{align}
with the sup norm $||f||=\max_{t\in [0,1]} |f_t|$.

Via the interpolation operator $\mathcal{W}_n:({\mathbb R^d})^{n+1}\rightarrow {C}_d$ with
\begin{align}\label{2.6}
[\mathcal{W}_n(\{y_k\}_{k=0}^n)]_t=([nt]+1-nt)y_{[nt]}+(nt-[nt])y_{[nt]+1}, \quad t\in[0,1],
\end{align}
where $[z]$ denotes the integer part of $z$, such a functional $F$ induces, for $n=1,2,\dots$, the path dependent payoff functionals
\[
F_n \set F(\mathcal{W}_n(S^{(n)}))
\]
on $(\Omega,\mathcal{F}_n)$. Their superreplication prices are given by
\begin{align}\label{2.8}
V_n(F)=\inf\{x \in \mathbb{R}\;|\; Z^{\pi}_n \geq F_n \text{ $\mathbb{P}$-a.s. for some strategy $\pi=(x,\gamma)$}\}.
\end{align}

The main result of this paper is the identification of the scaling
limit $\lim_n V_n$ of these superreplication prices as a
$G$-expectation in the sense of Peng \cite{P1,P2}. This $G$-expectation
involves a family $\mathcal{Q}$ of martingale measures for the
coordinate process $\mathbb{B}$ on $(C_d,\mathcal{B}(C_d))$ for which
volatility takes values in a certain set $\Gamma$:
\begin{align}\label{eq:55}
\mathcal{Q} \set \{\mathbb{Q} \;:\; \text{$\BB$ is a $\QQ$-martingale with $\mathbb{B}_0 =0$, $d\langle \mathbb B\rangle/dt \in \Gamma$ $\mathbb{Q}$-a.s.}\}.
\end{align}
The set $\Gamma$ of allowed volatilities is the set of all symmetric $d\times d$-matrices 
\[
\sigma\sigma^{'}+\sigma \beta+\beta'\sigma'
\]
where $\beta$ is a $d\times d$-matrix from the set
\begin{align}\label{eq:1}
B \set \left\{\left[\beta_1\dots\beta_d\right]\midb \beta_{k}=\sum_{j=1}^{d+1} w_{jk} v_j'
\text{ for some } w_{jk} \in \left[0,\frac{\kappa_k^++\kappa_k^-}{d+1}\right]\right\}.
\end{align}

A technical assumption for our main result to hold is
\begin{asm}\label{asm2.1}
For any matrix $\beta \in B$, we have that $\sigma'+\beta$ is invertible with
\begin{align}\label{2.9}
v_i \beta (\sigma'+\beta)^{-1}v'_j>-1, \quad i,j=1,\dots,d+1,
\end{align}
\end{asm}
Observe that~\eqref{2.9} holds true for sufficiently small
$\kappa^{+}_1,...,\kappa^{+}_d,\kappa^{-}_1,...,\kappa^{-}_d$. We
refer to Section~\ref{sec6} below for a more detailed discussion of
this assumption, including an example that shows it cannot be omitted
in our main result:

\begin{thm}\label{thm2.1}
Let Assumption~\ref{asm2.1} hold and consider a European option whose payoff is described by a continuous functional $F:C_d \to \RR$ with polynomial growth as in~\eqref{2.5}. Then the scaling limit of super-replication prices with proportional transaction costs $\kappa^\pm/\sqrt{n}$ is
\begin{align}\label{eq:155}
\lim_{n\rightarrow\infty} V_n(F)=
\sup_{\mathbb Q\in\mathcal Q} \mathbb{E}_{\mathbb{Q}} [F(\mathbb S)].
\end{align}
where, under $\mathbb{Q} \in \mathcal{Q}$, $\mathbb{S}$ denotes the vector of Dol{\'e}ans-Dade exponentials
\begin{align}\label{eq:3}
\mathbb{S}^{(i)}_t \set s_i
\exp\left(\mathbb{B}^{(i)}_t-\frac{1}{2}\langle\mathbb{B}^{(i)}\rangle_t\right),\quad t \in [0,1], \quad i=1,\dots,d.
\end{align}
\end{thm}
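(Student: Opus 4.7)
The plan is to prove the two inequalities $\liminf_n V_n(F)\geq \sup_{\mathbb Q\in\mathcal Q}\mathbb E_\mathbb Q[F(\mathbb S)]$ and $\limsup_n V_n(F)\leq \sup_{\mathbb Q\in\mathcal Q}\mathbb E_\mathbb Q[F(\mathbb S)]$ separately, both through a discrete-time super-replication duality. Since each sample space is finite, such a duality should yield the representation
\[
V_n(F)=\sup_{(\mathbb Q^n,Z^{(n)})\in\mathrm{CPS}_n}\mathbb E_{\mathbb Q^n}[F(\mathcal W_n(S^{(n)}))],
\]
where $\mathrm{CPS}_n$ denotes the set of consistent price systems: pairs $(\mathbb Q^n,Z^{(n)})$ with $\mathbb Q^n\ll\mathbb P$ and $Z^{(n)}$ a $\mathbb Q^n$-martingale that lies coordinatewise in the bid-ask cone $[(1-\kappa^-_i/\sqrt{n})S^{n,i},(1+\kappa^+_i/\sqrt{n})S^{n,i}]$. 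The task then reduces to identifying the possible weak limits of $\mathcal W_n(S^{(n)})$ under CPS measures and matching them with elements of $\mathcal Q$.

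For the lower bound, I would fix $\mathbb Q\in\mathcal Q$ with volatility process $\Sigma_t=\sigma\sigma'+\sigma\beta_t+\beta_t'\sigma'$ where $\beta_t\in B$ is adapted, and construct a sequence of CPS $(\mathbb Q^n,Z^{(n)})$ such that the laws of $\mathcal W_n(Z^{(n)})$ under $\mathbb Q^n$ converge weakly to the law of $\mathbb S$ under $\mathbb Q$. Writing $\beta_t=\sum_{j=1}^{d+1}w_{j,t}v_j'$ as in~\eqref{eq:1}, the non-negative weights $w_{j,t}$ will simultaneously parametrise (i) a small tilt of the transition probabilities of $\xi_k$, of order $1/\sqrt{n}$, defining the measure $\mathbb Q^n$, and (ii) a multiplicative perturbation $Z^{(n),i}_k=(1+\alpha^{(n),i}_k/\sqrt{n})S^{(n),i}_k$ designed so that $Z^{(n)}$ becomes a $\mathbb Q^n$-martingale while its $\mathbb Q^n$-quadratic covariation rate converges to $\Sigma_t$. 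Assumption~\ref{asm2.1} will be essential here, ensuring both that the tilted probabilities remain strictly positive and that the $\alpha^{(n)}$-perturbation stays within the bid-ask cone. Since $\mathcal W_n(S^{(n)})$ and $\mathcal W_n(Z^{(n)})$ differ by $O(1/\sqrt{n})$ uniformly, the polynomial growth bound~\eqref{2.5} combined with a uniform $L^p$-integrability estimate on $\|\mathcal W_n(S^{(n)})\|$ will give $\mathbb E_{\mathbb Q^n}[F(\mathcal W_n(S^{(n)}))]\to \mathbb E_\mathbb Q[F(\mathbb S)]$.

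For the upper bound, I would take a near-optimal sequence of CPS and extract subsequential weak limits. Polynomial growth combined with the bid-ask bound forces the relative perturbations to remain uniformly bounded, so the family $\{\mathcal W_n(Z^{(n)})\}$ is tight in $C_d$ and any weak limit $\bar Z$ is a continuous martingale under the weak limit $\bar{\mathbb Q}$ of the $\mathbb Q^n$. The hard part will be to verify that the quadratic covariation of $\bar Z$ lies in $\Gamma$: this requires a one-step Lagrangian analysis in which the martingale and bid-ask constraints are linearised in $1/\sqrt{n}$, and the simplex identities~\eqref{2.0+}--\eqref{2.2+} are used to show that the admissible first-order perturbations of $\sigma\sigma'$ are exactly those of the form $\sigma\beta+\beta'\sigma'$ with $\beta\in B$. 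Identifying $\bar Z$ with $\mathbb S$ under some $\mathbb Q\in\mathcal Q$ and invoking the continuity of $F$ should then close the argument.
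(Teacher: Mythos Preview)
Your overall plan---super-replication duality via consistent price systems, then two inequalities---matches the paper exactly, and you correctly locate Assumption~\ref{asm2.1} in the lower-bound construction. There is, however, one genuine missing step and one mischaracterisation.

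For the lower bound you propose to fix an \emph{arbitrary} $\mathbb Q\in\mathcal Q$ with adapted $\beta_t$ and build a discrete CPS converging to it. This is where the paper inserts an additional density argument (its Lemma~\ref{lem3.4}): one first shows that laws of the form $\Law(\int_0^\cdot\alpha\,dW)$ with $\alpha$ \emph{piecewise constant} and each piece a \emph{continuous} function of finitely many marginals of the driving martingale are dense in $\mathcal Q$. Only for such simple $\alpha$ does the paper carry out the Kusuoka-type construction. Without this reduction your plan runs into a real obstacle: for a generic $\mathbb Q$ the process $\beta_t$ is merely progressively measurable in the Brownian filtration and has no canonical pull-back to the discrete filtration generated by $\xi_1,\dots,\xi_n$, so ``evaluating $\beta_t$ along the discrete path'' is not well defined. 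The density step is what makes the construction feasible. Also, a minor correction: Assumption~\ref{asm2.1} is used only to guarantee that the constructed $M^{(n)}$ admits an equivalent martingale measure (the convex-hull/support condition); keeping the perturbation inside the bid-ask cone is handled separately by the choice of $\Phi,\Psi$ and does not require~\eqref{2.9}.

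For the upper bound the paper does not use a ``one-step Lagrangian analysis''. Instead it introduces auxiliary processes $A^{(n)},N^{(n)},X^{(n)},Y^{(n)},Z^{(n)}$ (see~\eqref{4.essential}), applies summation by parts to express the discrete quadratic covariation $Z^{(n)}$ in terms of the increments $\xi_k$ and the relative gaps $A^{(n)}_k=(M^{(n)}_k-S^{(n)}_k)/S^{(n)}_k$, and then uses a crucial empirical-frequency limit: the proportion of times $\xi_k=v_i$ on any macroscopic interval converges $\hat{\mathbb P}$-a.s.\ to $1/(d+1)$ (equation~\eqref{4.12}, deduced from convergence of $\ln\hat S^{(n)}$). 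Together with the simplex identities~\eqref{2.1+}--\eqref{2.2+} this forces the limiting covariation increment to be $(t-u)(\sigma\sigma'+\sigma\beta+\beta'\sigma')$ with $\beta\in B$. Your endpoint is the same, but the mechanism is this averaging argument, not a variational/Lagrangian one.
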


Hence, as in the one dimensional setting of Kusuoka~\cite{K}, the
scaling limit of superreplication prices is a $G$-expectation in the
sense of Peng~\cite{P1,P2}. The set $\cQ$ of models considered for
this $G$-expectation is parametrized by volatility processes taking
values in a neighborhood of the multi-variate geometric Brownian
motion. Remarkably, though, the geometry of this neighborhood not only
depends on the different transaction cost coefficients
$\kappa^{\pm}_1, \dots, \kappa^{\pm}_d$, but also on the choice of
possible directions $v_1,\dots,v_{d+1}$ in which the discrete-time
drivers can move. We give an example in Section~\ref{sec6}
below. 

Clearly, one could think of using other discrete-time reference models
than those of \cite{H} that we focussed on. One should bear in mind
though that these ought to be complete to ensure the Black-Scholes
model as the non-trivial scaling limit in the frictionless case.
Indeed, scaling limits of super-replication prices in incomplete
discrete-time models can become trivial in the scaling limit as we
illustrate in Section~\ref{sec6} for the case of a multivariate
Cox-Ross-Rubinstein model.

\section{Proof of the main result}

As in~\cite{K}, the starting point for the proof of
Theorem~\ref{thm2.1} is the well-known dual description of
discrete-time superreplication prices via consistent price systems as,
e.g., in Theorem~3.2 in~\cite{JK}. For any $n=1,2,\dots$, we denote by
$\mathcal Q_n$ the set of all probability measures $\mathbb Q_n$
absolutely continuous with respect to $\mathbb P$ on $(\Omega,\cF_n)$ such that there exists a
$d$--dimensional $\mathbb Q_n$-martingale
$M^{(n)}=\{M^{n,1}_k,...,M^{n,d}_k\}_{k=0,\dots,n}$ (with respect
to $\{\mathcal F_k\}_{k=0,\dots,n}$) whose components satisfy
$\mathbb P$-a.s.
\begin{align}\label{3.100}
\left(1-\frac{\kappa^{-}_i}{\sqrt n}\right)S^{n,i}_k\leq M^{n,i}_k\leq
\left(1+\frac{\kappa^{+}_i}{\sqrt n}\right)S^{n,i}_k, \quad
k=0,\dots,n.
\end{align}

\begin{lem}\label{lem3.1}
We have
\[
V_n(F)=\sup_{\mathbb Q_n\in\mathcal Q_n} \mathbb E_{\mathbb Q_n}
\left[F\left(\mathcal{W}_n(S^{(n)})\right)\right].
\]
\end{lem}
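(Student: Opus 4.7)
The plan is to derive Lemma~\ref{lem3.1} as a direct application of the standard superhedging duality in discrete-time models with proportional transaction costs, specifically Theorem~3.2 in~\cite{JK}. The key simplification is that for each fixed $n$ the model lives on a finite filtered probability space: since $\xi_m$ takes only the $d+1$ values $v_1,\dots,v_{d+1}$ and the horizon is finite, the underlying sample space effectively reduces to $\{v_1,\dots,v_{d+1}\}^n$ with $\mathbb P$ assigning positive mass to each atom. Once $n$ is large enough that $1+\langle\sigma_i,v_j\rangle/\sqrt n > 0$ for all $i,j$, the prices $S^{n,i}_k$ are strictly positive and bounded, so the polynomial growth condition~\eqref{2.5} makes $F_n$ bounded, and $(x,\gamma) \mapsto Z^\pi_n$ from~\eqref{2.8} is a finite-dimensional linear map; small $n$ can be handled directly or simply disregarded, since the lemma is only needed en route to the scaling limit.

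The first step is to rewrite the cost dynamics in the language of bid and ask prices: $(1-\kappa^-_j/\sqrt n) S^{n,j}_k$ and $(1+\kappa^+_j/\sqrt n) S^{n,j}_k$ are the bid and ask prices of asset $j$ at time $k$, so $Z^\pi_n$ is simply the time-$n$ liquidation value of a self-financing portfolio starting with cash $x$ and holding $\gamma_k$ units in the risky assets over $[k,k+1]$, with the convention $\gamma_{-1} \set 0$ and rebalancing carried out at the prevailing bid or ask. The superhedging problem~\eqref{2.8} then becomes a finite linear program in the coordinates of $(x,\gamma)$, with finitely many inequality constraints $Z^\pi_n \geq F_n$ indexed by the atoms of $\cF_n$.

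Next, I would invoke the duality between superreplication and consistent price systems. A consistent price system is by definition a pair $(\mathbb Q_n, M^{(n)})$ with $\mathbb Q_n \ll \mathbb P$ on $\cF_n$ and $M^{(n)}$ a $\mathbb Q_n$-martingale whose components satisfy the bid--ask bound~\eqref{3.100}; this is exactly the set $\cQ_n$ of the lemma, enriched with an accompanying martingale. Theorem~3.2 in~\cite{JK}, applied to our finite filtered space, yields $V_n(F) = \sup_{(\mathbb Q_n,M^{(n)})} \mathbb E_{\mathbb Q_n}[F(\mathcal{W}_n(S^{(n)}))]$, where the supremum runs over all consistent price systems; since the integrand depends on $M^{(n)}$ only through $\mathbb Q_n$, this is the identity claimed.

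The main issue one needs to worry about is the absence of a duality gap, but in the finite-state setting this is automatic from linear-programming strong duality, without the measure-theoretic approximation arguments one would need for general continuous-state models. The remaining work is purely bookkeeping: verifying that the conventions $\gamma_{-1} \set 0$ and the separate final-liquidation term at time $n$ in the definition of $Z^\pi_n$ match those of~\cite{JK}, and that the Lagrange multipliers associated to the transaction-cost constraints can be normalized to yield both the density $d\mathbb Q_n/d\mathbb P$ and the bid--ask-respecting martingale $M^{(n)}$.
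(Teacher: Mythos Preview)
Your proposal is correct and coincides with the paper's own treatment: the paper does not give a separate proof of Lemma~\ref{lem3.1} but simply records it as the dual description furnished by Theorem~3.2 in~\cite{JK}. Your additional remarks about the finite-state setup and LP strong duality are sound elaborations of why that citation applies here, but they go beyond what the paper itself spells out.
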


The following lemma will allow us to cover the case of polynomially
growing continuous payoffs $F$, rather than merely bounded ones. 

\begin{lem}\label{lem:coe}
  Assume that $F$ and $p$ are as in \eqref{2.5} and that
  $\Law(Z^{(n)} \mid \QQ_n ) \Rightarrow \Law(Z \mid \QQ)$ for some
  $C_d$-valued random variables $Z^{(n)},Z$ on probability spaces
  $(\Omega^{(n)},\mathcal F^{(n)},\QQ^{(n)})$,
  $(\Omega,{\mathcal F},\QQ)$, respectively. If
  $\sup_n \E_{\QQ^{(n)}}[(\max_t Z^{(n)}_t)^m] <\infty$ for some $m>p$, then
\[
\lim_{n\rightarrow\infty} \E_{\QQ^{(n)}}[F(Z^{(n)})]= E_\QQ [F(Z)].
\]
In particular, $\QQ\mapsto E_\QQ F(\mathbb S)$ is weakly continuous on $\mathcal Q$.
\end{lem}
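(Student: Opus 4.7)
The plan is to combine the continuous mapping theorem with a uniform integrability estimate to upgrade the weak convergence of laws into convergence of integrals. Since $F:C_d\to\RR$ is sup-norm continuous, the hypothesis $\Law(Z^{(n)}\mid\QQ^{(n)})\Rightarrow\Law(Z\mid\QQ)$ yields, by the continuous mapping theorem, $F(Z^{(n)})\Rightarrow F(Z)$ as real-valued random variables. It then suffices to show that $\{F(Z^{(n)})\}_n$ is uniformly integrable, for then the standard $L^1$-upgrade of weak convergence gives the desired $\EE_{\QQ^{(n)}}[F(Z^{(n)})]\to\EE_\QQ[F(Z)]$.

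For uniform integrability I would invoke the de la Vall\'ee-Poussin criterion with the superlinear function $x\mapsto x^{m/p}$, which is admissible precisely because $m>p$. The growth condition \eqref{2.5} gives the pointwise bound
\[
|F(Z^{(n)})|^{m/p}\leq \mathrm{const}\,\bigl(1+\max_{t\in[0,1]}|Z^{(n)}_t|^{m}\bigr),
\]
and the assumed moment bound makes the right-hand side bounded in expectation uniformly in $n$. This finishes the main claim.

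For the \emph{in particular} clause I would apply the first part with $Z^{(n)}:=\SS$ under $\QQ^{(n)}$ and $Z:=\SS$ under $\QQ$, for an arbitrary weakly convergent sequence $\QQ^{(n)}\to\QQ$ in $\mathcal Q$. The uniform moment bound $\sup_{\QQ\in\mathcal Q}\EE_\QQ[\max_t|\SS_t|^m]<\infty$ for every $m\geq 1$ follows from boundedness of $\Gamma$: under any $\QQ\in\mathcal Q$, the process $\SS$ is the stochastic exponential of a continuous martingale with uniformly bounded quadratic-variation density, for which all exponential moments exist by standard BDG/Novikov estimates. The genuinely delicate step---and the main obstacle---is that weak convergence of $\QQ^{(n)}$ to $\QQ$ on $C_d$ does not directly imply weak convergence of $\SS$ under $\QQ^{(n)}$ to $\SS$ under $\QQ$, because the quadratic variation $\langle\BB\rangle$ entering the Dol\'eans--Dade exponential in \eqref{eq:3} is not a pathwise continuous functional of $\BB$ in the sup norm. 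I would resolve this by lifting the $\QQ^{(n)}$ to measures on an enlarged canonical space carrying both $\BB$ and its absolutely continuous volatility process; there the map to $\SS$ is jointly sup-norm continuous, and the uniform boundedness of $\Gamma$ provides the tightness needed to extract a convergent subsequence and identify the limit with $\QQ$.
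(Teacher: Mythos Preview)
Your argument for the main claim is correct and essentially matches the paper's: the paper invokes Skorokhod representation to get almost sure convergence and then uses the growth bound together with $m>p$ for uniform integrability, which is equivalent to your continuous-mapping-plus-de-la-Vall\'ee-Poussin route.

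For the \emph{in particular} clause the approaches diverge. The paper dispatches the delicate point you correctly flag---that $\langle\BB\rangle$ and hence $\SS$ is not a sup-norm continuous functional of $\BB$---by a direct citation: the example following Theorem~4.4 in Duffie--Protter~\cite{DP} gives precisely the weak convergence $\Law(\SS\mid\QQ^{(n)})\Rightarrow\Law(\SS\mid\QQ)$ for martingale laws with uniformly controlled quadratic variation, after which BDG and compactness of $\Gamma$ supply the moment bound and the first part finishes. Your proposed workaround---lifting to an enlarged canonical space carrying $(\BB,\langle\BB\rangle)$, using the Lipschitz bound from compact $\Gamma$ for tightness of the second coordinate, identifying any limit $A'$ as $\langle\BB'\rangle$ via the martingale property of $\BB\BB'-\langle\BB\rangle$, and then applying continuous mapping to the now jointly continuous map $(\BB,A)\mapsto\SS$---is sound and is in fact one way to reprove the relevant piece of~\cite{DP} by hand. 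The paper's route is shorter; yours is self-contained and makes explicit where the boundedness of $\Gamma$ enters twice (tightness of $\langle\BB\rangle$ and uniform integrability for the martingale identification).
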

\begin{proof}
By Skorokhod representation theorem, there exists a probability space $(\hat\Omega,\hat {\mathcal F},\hat{\mathbb P})$ and $\hat Z^{(n)},\hat Z$ on $\hat {\Omega}$ such that $\hat Z^{(n)}\rightarrow\hat Z$ almost surely and such that the laws of $\hat Z^{(n)}$ and $\hat Z$ coincide with those of $Z^{(n)}$ and $Z$. The growth condition of $F$ and existence of $m>p$ imply that $(F(\hat Z^{(n)}))_{n=1}^\infty$ forms a uniformly integrable family which gives the first claim.

As to second claim, assume that $\QQ^n\Rightarrow \QQ$ for some martingales laws from $\mathcal Q$. By the example after Theorem~4.4 in~\cite{DP}, $\Law(\mathbb S\mid \QQ^n))\Rightarrow\Law(\mathbb S,\QQ)$. Using Burkholder--Davis--Gundy inequality and the compactness of $\Gamma$, the claim now follows from the first part.
\end{proof}

In the next section, using Lemma~\ref{lem:coe}, we will get that 
\begin{align}\label{2.lower} 
\limsup_{n\rightarrow\infty} V_n(F)\leq
\sup_{\mathbb Q\in\mathcal Q} \mathbb{E}_{\mathbb{Q}} [F(\mathbb S)]
\end{align} 
provided that the set $\mathcal{Q}$ of~\eqref{eq:55} contains all weak limits of sequences the form $(\Law(\cW_n(\tilde M^{(n)}) \;|\; \QQ_n))_{n=1}^\infty$ for arbitrary choices of $\QQ_n \in \cQ_n$. This is accomplished even without Assumption~\ref{asm2.1} which is only needed in Section~\ref{sec5}, where we show that $\cQ$ is not ``too big''. For this, we extend, to our multi-dimensional setting, Kusuoka's construction of suitable discrete-time approximations 
to deduce
\begin{align}\label{2.upper} 
\liminf_n V_n(F) \geq \sup_{\mathbb Q\in\mathcal Q}
\mathbb{E}_{\mathbb{Q}} [F(\mathbb S)].
\end{align} 
Theorem~\ref{thm2.1} then follows immediately from~\eqref{2.lower} and~\eqref{2.upper}.

\subsection{Proof of (\ref{2.lower})}\label{sec4}
In this section we prove the inequality \eqref{2.lower}:
\[
\limsup_{n\rightarrow\infty} V_n(F)\leq
\sup_{\mathbb Q\in\mathcal Q} \mathbb{E}_{\mathbb{Q}} [F(\mathbb S)].
\]
Due to the dual description of $V_n(F)$ in Lemma~\ref{lem3.1} we can find for any $n\in\mathbb N$ a probability measure $\mathbb Q_n\in\mathcal{Q}_n$ along with a martingale $M^{(n)}$ close to $S^{(n)}$ in the sense of~\eqref{3.100} such that
\begin{align}\label{4.2}
V_n(F)<\frac{1}{n}+\mathbb E_{\mathbb Q_n} \left[F\left(\mathcal{W}_n(S^{(n)})\right)\right].
\end{align}
We get from \eqref{3.100} that, for every $i=1,\dots, d$ and $n=1,\dots$, the $\QQ_n$-martingales $\tilde{M}^n \set M^{n,i}$ are as
required in the following lemma.
\begin{lem}\label{lem3.2}
For any $n\in\mathbb{N}$ let $\mathbb Q_n$ be a probability measure on $\Omega$ and let $\tilde M^{(n)}=\{\tilde{M}^{(n)}_k\}_{k=0,\dots,n}$ be a positive $\mathbb Q_n$ martingale with $\tilde M^{(n)}_0=1$. Assume that there exists a constant $c>0$ such that
\begin{align}\label{3.2}
|\tilde M^{(n)}_{k+1}-\tilde M^{(n)}_k|\leq \frac{c}{\sqrt n} \tilde M^{(n)}_k.
\end{align}
Then the sequence $\Law(\cW_n(\tilde M^{(n)}) \;|\; \QQ_n)$, $n\in\mathbb N$, is tight on the space $C_d$. Furthermore, for any $m\in\mathbb N$,
\begin{align}
\sup_{n\in\mathbb N}\mathbb E_{\mathbb Q_n}[\max_{0\leq k\leq n} \tilde M^{(n)}_k]^{2m} &<\infty,\label{3.3}\\
\sup_{n\in\mathbb N}\mathbb E_{\mathbb Q_n}[\max_{0\leq k\leq n} |\ln\tilde M^{(n)}_k|]^{2m} &<\infty.\label{3.4}
\end{align}
\end{lem}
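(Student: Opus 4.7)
The plan is to analyze the multiplicative increments $\Delta_j := \tilde M^{(n)}_j/\tilde M^{(n)}_{j-1}-1$, which by hypothesis are $\QQ_n$-martingale differences satisfying $|\Delta_j|\leq c/\sqrt n$. Writing $\tilde M^{(n)}_k = \prod_{j=1}^k(1+\Delta_j)$ and $\ln \tilde M^{(n)}_k = \sum_{j=1}^k \ln(1+\Delta_j)$ reduces all three claims to standard martingale estimates, and I would address the $L^{2m}$-bound \eqref{3.3} first, since the other two rely on it.

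For \eqref{3.3}, I apply Doob's $L^{2m}$-maximal inequality to the nonnegative submartingale $\tilde M^{(n)}$, reducing the problem to a uniform bound on $\E_{\QQ_n}[(\tilde M^{(n)}_n)^{2m}]$. Since $2m\in\NN$, the expansion
\[
(1+\Delta_j)^{2m}=\sum_{\ell=0}^{2m}\binom{2m}{\ell}\Delta_j^\ell
\]
is a finite polynomial. Taking the conditional expectation with respect to $\cF_{j-1}$, the $\ell=0$ term contributes $1$, the $\ell=1$ term vanishes by the martingale property, and for $\ell\geq 2$ one has $|\E_{\QQ_n}[\Delta_j^\ell\mid\cF_{j-1}]|\leq (c/\sqrt n)^\ell\leq C_m/n$. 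Iterating the tower property over $j=1,\dots,n$ yields
\[
\E_{\QQ_n}[(\tilde M^{(n)}_n)^{2m}]\leq (1+C_m/n)^n\leq e^{C_m},
\]
and combining with Doob's inequality gives \eqref{3.3}.

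For \eqref{3.4}, Taylor's theorem gives $\ln(1+x)=x-\tfrac12 x^2+r(x)$ with $|r(x)|\leq C|x|^3$ for $|x|\leq 1/2$, hence, for $n$ large enough,
\[
\ln \tilde M^{(n)}_k = N^{(n)}_k - \tfrac12\sum_{j=1}^k \Delta_j^2 + R^{(n)}_k,
\]
where $N^{(n)}_k:=\sum_{j=1}^k \Delta_j$ is a $\QQ_n$-martingale, the middle term is bounded deterministically by $c^2/2$, and $|R^{(n)}_k|\leq Cc^3/\sqrt n$. Burkholder--Davis--Gundy applied to $N^{(n)}$ then yields
\[
\E_{\QQ_n}\!\left[\max_k |N^{(n)}_k|^{2m}\right]\leq C_m' \E_{\QQ_n}\!\left[\Bigl(\sum_{j=1}^n \Delta_j^2\Bigr)^{m}\right]\leq C_m' c^{2m},
\]
since $\sum_j \Delta_j^2\leq c^2$ deterministically. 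Collecting terms gives \eqref{3.4}.

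For tightness, BDG applied to the stopped martingale $(\tilde M^{(n)}_j-\tilde M^{(n)}_k)_{j\geq k}$ together with the uniform $L^{2m}$-bound from \eqref{3.3} gives
\[
\E_{\QQ_n}\!\left[|\tilde M^{(n)}_l-\tilde M^{(n)}_k|^{2m}\right]\leq C_m''\left(\frac{l-k}{n}\right)^{\!m},
\]
because the quadratic variation over steps $k{+}1,\dots,l$ is bounded by $(c^2/n)(l-k)(\max_j \tilde M^{(n)}_j)^2$. Since $\cW_n$ interpolates linearly, a short case distinction extends this to $\E_{\QQ_n}[|\cW_n(\tilde M^{(n)})_t-\cW_n(\tilde M^{(n)})_s|^{2m}]\leq C(t-s)^m$ for all $s,t\in[0,1]$, and taking $m\geq 2$, Kolmogorov's tightness criterion delivers tightness of $\{\Law(\cW_n(\tilde M^{(n)})\mid\QQ_n)\}_n$. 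The main obstacle is the estimate \eqref{3.3}: controlling $\E_{\QQ_n}[(\tilde M^{(n)}_n)^{2m}]$ uniformly in $n$ requires simultaneously exploiting the martingale property to cancel the linear term and the deterministic smallness of the increments to keep the per-step contribution at $C_m/n$, so that the product over $n$ steps remains bounded; once \eqref{3.3} is in hand, the other two statements follow by standard martingale inequalities.
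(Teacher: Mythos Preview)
Your proposal is correct and follows essentially the same route as the paper: the moment bound~\eqref{3.3} is obtained by controlling $\E_{\QQ_n}[(\tilde M^{(n)}_n)^{2m}]$ via the multiplicative increments and then invoking Doob's inequality; tightness comes from a Burkholder--Davis--Gundy estimate combined with~\eqref{3.3} and Kolmogorov's criterion; and~\eqref{3.4} is deduced from the Taylor expansion of the logarithm together with a BDG bound on the martingale $N^{(n)}_k=\sum_{j\le k}\Delta_j$. The paper's own proof is terser---it handles~\eqref{3.3} and~\eqref{3.4} by citing the corresponding estimates (4.23)--(4.26) in Kusuoka, and carries out the tightness step only for the fourth moment using the predictable (rather than optional) quadratic variation form of BDG---but the underlying ideas are the same. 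One small remark: in your narrative you say ``the other two rely on'' \eqref{3.3}, but in fact your argument for~\eqref{3.4} does not use~\eqref{3.3} at all, since you bound $\sum_j \Delta_j^2\le c^2$ deterministically; only the tightness estimate genuinely needs~\eqref{3.3}.
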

\begin{proof}
Following the same arguments as in (4.23) in \cite{K} we obtain \eqref{3.3}. To establish the tightness assertion, we fix $0\leq u<t\leq 1$ and $n\in\mathbb N$. From the Burkholder--Davis--Gundy inequality and \eqref{3.2}--\eqref{3.3} we get
\begin{align*}
\mathbb E_{\mathbb Q_n}&([\cW_n(\tilde M^{(n)})]_t-[\cW_n(\tilde M^{(n)})]_u)^4\\
&= \mathbb E_{\mathbb Q_n}(\tilde M^{(n)}_{[nt]}-\tilde M^{(n)}_{[nu]})^4\\
&\leq \const \mathbb E_{\mathbb Q_n}\left(\sum_{k=[nu]}^{[nt]-1}
\mathbb  E_{\mathbb Q_n}\left((\tilde M^{(n)}_{k+1}-\tilde
  M^{(n)}_k)^2|\mathcal F_k\right)\right)^2\\
&=\const \mathbb E_{\mathbb Q_n}\left(\sum_{k=[nu]}^{[nt]-1}(\tilde M^{(n)}_k)^2
\mathbb E_{\mathbb Q_n}\left(\left(\frac{\tilde M^{(n)}_{k+1}}{\tilde
  M^{(n)}_k}-1\right)^2\bigg|\mathcal F_k\right)\right)^2\\
&\leq
\const (n(t-u))^2\frac{c^4}{n^2}
 \sup_{m\in\mathbb N}\mathbb E_{\mathbb Q_m}\left(\max_{0\leq k\leq
  m} \tilde M^{(m)}_k\right) ^{4}\\
&\leq \const (t-u)^2.
\end{align*}

Thus the sequence $\Law(\cW_n(\tilde M^{(n)}) \; | \; \QQ_n)$, $n\in\mathbb N$, is tight by Kolmogorov's criterion. Finally, similarly to (4.24)--(4.26) in \cite{K}, we obtain \eqref{3.4}.
\end{proof}

From the preceding lemma, we obtain that the sequence $\Law(\cW_n(M^{n}) \;|\; \QQ_n)$, $n\in\mathbb N$, is tight on $C_d$. So there exists a subsequence (which, for ease of notation, we still index by $n$) that converges in law on $C_d$ and from \eqref{3.100} we can conclude that $\Law(\cW_n(S^{(n)})\;|\;\QQ_n)$, $n\in\mathbb N$, converges to the same law.

By Skorohod's representation theorem (see \cite{D}), there is a
probability space
$(\tilde\Omega,\tilde{\mathcal F},\tilde{\mathbb P})$ with processes
$\tilde{S}^{(n)}$ which have the same law as $\cW(S^{(n)})$ under
$\QQ_n$, respectively, and which converge almost surely uniformly on
$[0,1]$ to another continuous process
$M=\{(M^{(1)}_t,...,M^{(d)}_t)\}_{0 \leq t \leq 1}$. Since, for each
$n=1,2,\dots$, $M^{(n)}$ is adapted to the filtration
$(\cF^n_k)_{k=0,\dots,n}$ generated by $S^{(n)}$, we can construct
processes $\tilde{M}^n$ on
$(\tilde\Omega,\tilde{\mathcal F},\tilde{\mathbb P})$ such that, under
$\tilde{\P}$, $(\tilde{S}^{n},\tilde{M}^n)$ has the same joint law as
$(S^{n},M^{(n)})$ under $\QQ_n$. In view of \eqref{3.100} this gives
\begin{align}\label{4.4}
\tilde{M}^{(n)}\rightarrow M \text{ in } C_d \quad  \tilde{\mathbb P}  \text{-a.s.}
\end{align}
By \eqref{4.2}, \eqref{3.3}, and Lemma~\ref{lem:coe},
\begin{align}\label{4.5}
\limsup_{n} V_n(F)\leq\mathbb E_{\tilde{\mathbb P}}[F(M)].
\end{align}
In order to complete the proof of \eqref{2.lower} it now remains to establish that $\Law(M \mid \tilde{\P})$ is contained in $\mathcal{Q}$.

First we show that $M$ is a $\tilde{\P}$-martingale with respect to its own filtration. Fix $0\leq t_1\leq ...\leq t_k\leq u<t\leq 1$ and consider a bounded continuous function $g:\mathbb R^k\rightarrow\mathbb R^d$. From \eqref{3.3} and~\eqref{4.4} we get
\begin{align*}
\mathbb{E}_{\tilde{\mathbb P}}&[( M_t-M_u)' g(M_{t_1},...,M_{t_k})]\\&=
\lim_{n\rightarrow\infty}\mathbb{E}_{\tilde{\mathbb P}}[( \tilde{M}^{(n)}_{[nt]}-\tilde{M}^{(n)}_{[nu]})'
g(\tilde{M}^{(n)}_{[nt_1]},...,\tilde{M}^{(n)}_{[nt_k]}) ]=0,
\end{align*}
where the first equality follows from ~\eqref{4.4} and the second from the fact that, for each $n=1,2,\dots$, $\tilde{M}^{(n)}$ has the same law under $\tilde{\P}$ as $M^{(n)}$ has under $\QQ_n$, and each $M^{(n)}$ is a $\QQ_n$-martingale with respect to the filtration $(\cF^n_k)_{k=0,\dots,n}$ generated by $S^{(n)}$. From standard density arguments we thus obtain the asserted martingale property.

Let us finally verify that $M$ satisfies the volatility condition. Observe that, as a consequence of~\eqref{3.4}, each $M^{(i)}$, $i=1,\dots,d$, is a strictly positive martingale. So let us introduce the continuous stochastic process $N=\{(N^{(1)}_t,...,N^{(d)}_t)\}_{0 \leq t \leq 1}$ by $N^{(i)}_t=\int_{0}^t \frac{dM^{(i)}_u}{M^{(i)}_u}$ for which, clearly,
\begin{align}\label{4.6}
M^{(i)}_t=s_i \exp\left(N^{(i)}_t-\langle N^{(i)}\rangle_t/2\right), \quad i=1,...,d.
\end{align}
We need to show that $\langle N \rangle$ is absolutely continuous almost surely with density $d\langle N \rangle/dt$ taking values in $\Gamma$. For each $k=0,\dots,n$ and $i,j=1,\dots,d$, we define
\begin{align}\label{4.essential}
\alpha^{n,i}_k &\set\frac{1}{\sqrt n}\langle \sigma_i,\xi_k\rangle, \\
A^{n,i}_k&\set \frac{M^{n,i}_k-S^{n,i}_k}{S^{n,i}_k},
           \nonumber\\
N^{n,i}_k&\set\sum_{m=1}^k\Delta N^{n,i}_m, \ttext{where} 
\Delta N^{n,i}_m \set
\frac{M^{n,i}_m-M^{n,i}_{m-1}}{S^{n,i}_{m-1}},\nonumber\\
X^{n,i}_k& \set \sum_{m=1}^k \Delta X^{n,i}_m, \ttext{where} 
\Delta X^{n,i}_m \set
\alpha^{n,i}_m(1+ A^{n,i}_m),\nonumber\\
Y^{n,i,j}_k & \set\sum_{m=1}^k \Delta Y^{n,i,j}_m, \ttext{where}
\Delta Y^{n,i,j}_m \set X^{n,i}_{m-1}\Delta N^{n,j}_m+X^{n,j}_{m-1}\Delta N^{n,i}_m,\nonumber\\
Z^{n,i,j}_k &\set X^{n,i}_k X^{n,j}_k- Y^{n,i,j}_k.\nonumber
\end{align}
We thus obtain the $\RR^d$-valued processes $N^{(n)}=(N^{n,1},...,N^{n,d})$,
$X^{(n)}=(X^{n,1},...,X^{n,d})$, and the matrix-valued processes
$Y^{(n)}=\{Y^{n,i,j}\}_{i,j=1}^d$ and $Z^{(n)}=\{Z^{n,i,j}\}_{i,j=1}^d$. 

By \eqref{2.3}, we have
\begin{align}\label{4.6+}
\Delta N^{n,i}_k-\Delta X^{n,i}_k=\Delta A^{n,i}_k,
\end{align}
so
$|X^{n,i}_k-N^{n,i}_k|=|A^{n,i}_k-A^{n,i}_0| \leq
\frac{\kappa_i}{\sqrt n}$.
This together with \eqref{4.4} and Theorem~4.3 in \cite{DP} gives the
weak convergence
\begin{align}\label{4.7}
\Law\left(\left(S^{(n)}_{[nt]},M^{(n)}_{[nt]},N^{(n)}_{[nt]},X^{(n)}_{[nt]}\right)_{0\leq t \leq 1} \;\middle|\; \P\right) \Rightarrow \Law\left(\left(M,M,N,N\right) \;\middle|\; \tilde{\mathbb P}\right)
\end{align}
on the Skorohod space $(\mathbb D^d)^4$ of all $c\grave{a}dl\grave{a}g$ functions $f:[0,1]\rightarrow\mathbb R^d$ endowed with the Skorohod topology. From \eqref{4.7}, the definitions of the processes $Y,Z$ and
$$
\langle N^{i,j}\rangle_t=N^{(i)}_t N^{(j)}_t-
\int_{0}^t [N^{(i)}_u dN^{(j)}_u+ N^{(j)}_u dN^{(i)}_u],
$$
we can conclude using Theorem 4.3 in \cite{DP} the weak convergence
\begin{align}\label{4.8}
\Law\left(\left(S^{(n)}_{[nt]},M^{(n)}_{[nt]},N^{(n)}_{[nt]},X^{(n)}_{[nt]},Z^{(n)}_{[nt]}\right)_{0
      \leq t \leq 1}
  \;\middle|\; \P\right) \rightarrow \Law\left(M,M,N,N,\langle N\rangle \;\middle|\; \tilde{\mathbb P}\right)
\end{align}
on the space $(\mathbb D^d)^4\times \mathbb D^{d^2}$.

By Skorohod representation theorem, there exists a probability space $(\hat\Omega,\hat{\mathcal F},\hat{\mathbb P})$ such that all the processes $S^{(n)},\dots,N$ in \eqref{4.8} have copies $\hat S^{(n)},\dots,\hat N$ with the same joint distributions on this space and satisfy
\begin{align}\label{4.9}
\left(\hat{S}^{(n)},\hat{M}^{(n)},\hat{N}^{(n)},\hat{X}^{(n)},\hat{Z}^{(n)}\right)\rightarrow \left(\hat M,\hat M,\hat N,\hat N,\langle \hat N\rangle\right) \quad \hat{\mathbb P}\text{-a.s.}
\end{align}
on the space $(\mathbb D^d)^4\times \mathbb D^{d^2}$. Moreover, on $(\hat\Omega,\hat{\mathcal F},\hat{\mathbb P})$, copies in
distribution of the random vectors $\xi_1,...,\xi_n$ can be recovered from $\hat{S}^{(n)}$, $n=1,2,\dots$. These copies now depend on $n$, so we denote them by
$\hat{\xi}^{(n)}_1,\dots, \hat{\xi}^{(n)}_n$.

Next, from \eqref{4.6+} and the summation by parts formula we obtain
\begin{align*}
\hat Y^{n,i,j}_k&=\sum_{m=1}^k [\hat X^{n,i}_{m-1}\Delta \hat X^{n,j}_m+\hat X^{n,j}_{m-1}\Delta \hat X^{n,i}_m]\\
&\qquad +\sum_{m=1}^k [\hat X^{n,i}_{m-1}\Delta \hat A^{n,j}_m+\hat X^{n,j}_{m-1}\Delta \hat A^{n,i}_m]\nonumber\\
&=\hat X^{n,i}_k \hat X^{n,j}_k-\sum_{m=1}^k \Delta \hat X^{n,i}_m\Delta \hat X^{n,j}_m\nonumber\\
&\qquad+\hat X^{n,i}_{k-1}\hat A^{n,j}_k+\hat X^{n,j}_{k-1}\hat A^{n,i}_k
-\sum_{m=1}^{k-1} [\Delta \hat X^{n,i}_m\hat A^{n,j}_m+\Delta \hat X^{n,j}_m \hat A^{n,i}_m]\nonumber\\
&=\hat X^{n,i}_k \hat X^{n,j}_k-
[\sum_{m=1}^k \hat \alpha^{n,i}_m \hat\alpha^{n,j}_m+\hat\alpha^{n,i}_m \hat A^{n,j}_m+\hat\alpha^{n,j}_m \hat A^{n,i}_m]\nonumber\\
&\qquad+O(n^{-1/2})(1+|\hat X^{(n)}_{k-1}|),\nonumber
\end{align*}
where the term $O(n^{-1/2})$ above is uniformly bounded by $c n^{-1/2}$ for some constant $c$. For our matrix-valued process $\hat Z$ this entails
\begin{align}\label{4.10}
\hat Z^{(n)}_k
&=\frac{1}{n}\sum_{m=1}^k\left[\sigma (\hat \xi^{(n)}_m)^{'}\hat \xi^{(n)}_m\sigma^{'}+\sqrt n\sigma (\hat \xi^{(n)}_m)^{'}\hat  A^{(n)}_m+\sqrt n[\sigma(\hat \xi^{(n)}_m)^{'} \hat A^{(n)}_m]^{'}\right]\\
&\qquad+O(n^{-1/2})(1+|\hat X^{(n)}_{k-1}|).\nonumber
\end{align}

Next, we fix $0\leq u<t\leq 1$ and define the random sets of integers
\[
V^{n,i} \set \{[nu]\leq k\leq [nt]  \ \ :\ \ \hat {\xi}^{(n)}_k=v_i\}, \quad
i=1,...,d+1
\]
for each $n$. Denoting by $|V^{n,i}|$ the number of elements in $V^{n,i}$, we then get from \eqref{4.10} that
\begin{align}\label{4.11}
\hat Z^{(n)}_{[nt]}&-\hat Z^{(n)}_{[nu]}\\\nonumber&=\frac{[nt]-[nu]}{n}\sum_{i=1}^{d+1}\frac{|V^{n,i}|}{[nt]-[nu]}\sigma v^{'}_i v_i\sigma^{'}\\
&\qquad+\frac{[nt]-[nu]}{n}\sum_{i=1}^{d+1}\frac{|V^{n,i}|}{[nt]-[nu]}\left(\frac{\sum_{j\in V^{n,i}}\sqrt n\sigma v^{'}_i \hat A^{(n)}_j+\sqrt n[\sigma v^{'}_i \hat A^{(n)}_j]^{'}}{|V^{n,i}|}\right)\nonumber\\
&\qquad+O(n^{-1/2})(1+|\hat X^{(n)}_{[nt]-1}|+|\hat X^{(n)}_{[nu]-1}|).\nonumber
\end{align}
Using Taylor's expansion and (\ref{2.3}) we observe
\begin{align*}
\ln \hat S^{(n)}_{[nt]}-\ln \hat S^{(n)}_{[nu]} &\set (\ln \hat S^{n,1}_{[nt]}-\ln \hat S^{n,1}_{[nu]},...,\ln \hat S^{n,d}_{[nt]}-\ln \hat S^{n,d}_{[nu]})\\
&= \frac{[nt]-[nu]}{\sqrt n}\sum_{i=1}^{d+1}\frac{|V^{n,i}|}{[nt]-[nu]}v_i\sigma^{'} + O(1).
\end{align*}
Here, by \eqref{4.9}, the left side converges $\hat {\mathbb P}$-almost surely, 
so the right side is bounded $\hat \P$-almost surely which implies
\[
\lim_{n\rightarrow\infty}\sum_{i=1}^{d+1}\frac{|V^{n,i}|}{[nt]-[nu]}v_i=0\quad \hat{\mathbb P}\text{-a.s.}
\]
Since $v_1,\dots,v_{d+1}$ are affinely independent with~\eqref{2.1+}, we thus have
\begin{align}\label{4.12}
\lim_{n\rightarrow\infty}\frac{|V^{n,i}|}{[nt]-[nu]}=\frac{1}{d+1}\quad \hat{\mathbb P}\text{-a.s.},\ i=1,\dots,d+1.
\end{align}

From \eqref{2.2+}, \eqref{4.9} and \eqref{4.11}--\eqref{4.12}, we can now conclude that
\begin{align}\label{4.13}
\langle \hat N\rangle_t&-\langle \hat N\rangle_u\\&=
\lim_{n\rightarrow\infty}\hat Z^{(n)}_{[nt]}-\hat Z^{(n)}_{[nu]}\nonumber\\&=(t-u)\left(\sigma\sigma^{'}+
\lim_{n\rightarrow\infty}\frac{1}{d+1}\sum_{i=1}^{d+1}\sum_{j\in V^{n,i}}\frac{\sqrt n\sigma v^{'}_i \hat A^{(n)}_j+\sqrt n[\sigma v^{'}_i  \hat A^{(n)}_j]^{'}}{|V^{n,i}|}\right).\nonumber
\end{align}
We have, by \eqref{2.1+},
\[
\frac{1}{d+1}\sum_{i=1}^{d+1}\sum_{j\in V^{n,i}}\frac{\sqrt n\sigma v^{'}_i \hat A^{(n)}_j}{|V^{n,i}|}=
\sigma\frac{1}{d+1} \sum_{i=1}^{d+1} v^{'}_i\left(\sum_{j\in V^{n,i}}\frac{\sqrt n \hat A^{(n)}_j}{|V^{n,i}|}+(\kappa_1^-,\dots,\kappa_d^-)\right),
\]
where the right side is of the form $\sigma \beta$ for some $\beta\in B$. We conclude that for any $u<t$, $\frac{\langle \hat N\rangle_t-\langle \hat N\rangle_u}{t-u}$ is uniformly bounded and contained in $\Gamma$. As a result, $\langle \hat N \rangle$ has a density taking values in $\Gamma$ $\hat{\mathbb P}\times dt$-a.e., exactly as required.

\subsection{Proof of \eqref{2.upper}}\label{sec5}

\subsubsection{The density argument}

In the first step for the proof of~\eqref{2.upper}, we identify a suitable dense subset of $\cQ$. Lemma~\ref{lem:coe} then allows us to restrict the supremum in \eqref{2.upper} to this subset.

For arbitrary complete probability space $(\Omega^W, \mathcal{F}^{W}, \mathbb P^{W})$ carrying a standard $d$--dimensional Brownian motion $W=\{(W^{(1)}_t,...,W^{(d)}_t)\}_{0 \leq t \leq 1}$, we denote by $\Gamma^W_c$ the set of processes $\alpha$ which are progressively measurable with respect to the augmented filtration generated by $W$ and which are of the form
\begin{align}\label{eq:4}
\alpha_t = \sum_{l=0}^L \alpha_l 1_{(T_{l},T_{l+1}]}(t)
\end{align}
for some $0=T_0<\dots<T_{L+1}=1$  with $\alpha_l$ of the form 
\begin{align}\label{eq:5}
\alpha_l= a_l\left(\int_0^{{t_1^{(l)}}} \langle \alpha, dW \rangle,\dots,\int_0^{t_{n_l}^{(l)}} \langle \alpha, dW \rangle \right)
\end{align}
for some $0\le t_1^{(l)} < \dots < t_{n_l}^{(l)} \le T_l$ and some continuous functions $a_l$ taking values in $\sqrt{\Gamma}$ such that $a_l^2-\epsilon I$ is positive definite for some constant $\epsilon>0$.

\begin{lem}\label{lem3.4}
The set
\begin{align}\label{3.11+}
\left\{\Law\left(\int_0^\cdot\alpha dW\midb \mathbb P^W\right)\midb \mathbb P^W,\alpha\in\Gamma_c^W\right\}
\end{align}
is a dense subset of $\mathcal Q$.
\end{lem}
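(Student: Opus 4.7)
Fix $\QQ \in \cQ$ and aim to construct $\alpha^n \in \Gamma_c^W$ on some probability space carrying a Brownian motion $W$ with $\Law\bigl(\int_0^\cdot \alpha^n\, dW\bigr) \Rightarrow \QQ$. I work on the canonical space $(C_d, \cB(C_d), \QQ)$ with coordinate process $\BB$. Since $\Gamma$ is the affine image of the compact convex set $B$ under $\beta\mapsto\sigma\sigma'+\sigma\beta+\beta'\sigma'$ and contains $\sigma\sigma'\succ 0$, compactness combined with Assumption~\ref{asm2.1} yields a uniform lower eigenvalue bound on elements of $\Gamma$. I define the $\cF^{\BB}$-adapted square root $\alpha^\QQ_t\set(d\langle\BB\rangle_t/dt)^{1/2}\in\sqrt{\Gamma}$ and the Brownian motion $W^\QQ\set\int_0^\cdot(\alpha^\QQ_s)^{-1}\,d\BB_s$, so that $\BB=\int_0^\cdot \alpha^\QQ\,dW^\QQ$. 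Adaptedness of $\alpha^\QQ$ to $\cF^{\BB}$ then yields a progressively measurable functional $\Phi^\QQ:[0,1]\times C_d\to\sqrt{\Gamma}$ with $\alpha^\QQ_t=\Phi^\QQ(t,\BB)$.

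To enforce the strict positivity $a_l^2-\epsilon I\succ 0$ from~\eqref{eq:5}, I would first perturb: let $\Phi^{\QQ,\delta}$ be the positive square root of $(1-\delta)(\Phi^\QQ)^2+\delta\sigma\sigma'$. Since $\Gamma$ is convex and $\sigma\sigma'\in\Gamma$, this stays in $\sqrt{\Gamma}$ with $(\Phi^{\QQ,\delta})^2\succeq\delta\lambda_{\min}(\sigma\sigma')\,I$. Next I discretize: partition $[0,1]$ by $T_l=l/N$ and, on each interval $(T_l,T_{l+1}]$, approximate the matrix $(\Phi^{\QQ,\delta})^2(T_l,\BB)$ by a continuous cylinder functional in variables $\BB_{t_1^{(l)}},\dots,\BB_{t_{n_l}^{(l)}}$ with $t_i^{(l)}\le T_l$ taken on a refining mesh. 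The construction uses (i) approximation of $\BB_{[0,T_l]}$ by its piecewise-linear interpolant through $\{t_i^{(l)}\}$, using continuity of $\BB$; (ii) Lusin's theorem followed by convolution smoothing to upgrade the resulting Borel function to a continuous one; (iii) nearest-point retraction onto the convex compact slab $\{c\in\Gamma:c\succeq\tfrac{\delta}{2}\lambda_{\min}(\sigma\sigma')I\}$, which preserves continuity. Taking the principal matrix square root (continuous on strictly positive definite matrices) yields continuous $a_l^{N,\delta}:(\RR^d)^{n_l}\to\sqrt{\Gamma}$ with $(a_l^{N,\delta})^2-\epsilon I\succeq 0$ for $\epsilon\set\tfrac{\delta}{2}\lambda_{\min}(\sigma\sigma')$.

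On a fresh space $(\Omega^W,\cF^W,\PP^W)$ carrying a standard Brownian motion $W$, I define $\alpha^{N,\delta}$ and $M^{N,\delta}\set\int_0^\cdot\alpha^{N,\delta}\,dW$ inductively on the $(T_l,T_{l+1}]$ by
\[
\alpha^{N,\delta}_t=a_l^{N,\delta}\bigl(M^{N,\delta}_{t_1^{(l)}},\dots,M^{N,\delta}_{t_{n_l}^{(l)}}\bigr),\quad t\in(T_l,T_{l+1}],
\]
which is well-posed since the right-hand side is $\cF^W_{T_l}$-measurable by induction. The resulting $\alpha^{N,\delta}$ belongs to $\Gamma_c^W$. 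It then suffices to show $\Law(M^{N,\delta})\Rightarrow\QQ$ as $N\to\infty$ and $\delta\to 0$. Tightness follows from compactness of $\sqrt{\Gamma}$ and Burkholder--Davis--Gundy. Any weak limit $\tilde\QQ$ is a martingale law (preservation of the martingale property under weak limits, using uniform moment bounds) with $d\langle\cdot\rangle/dt\in\Gamma$ since $\Gamma$ is closed, so $\tilde\QQ\in\cQ$. Identification $\tilde\QQ=\QQ$ then comes from matching quadratic variations along subsequential limits: the $L^2$-closeness of the cylinder coefficients to $(\Phi^\QQ)^2$ under $\QQ$, together with a Gronwall-type bootstrap across the $N$ subintervals, forces $d\langle M^{N,\delta}\rangle/dt$ to approach $(\Phi^\QQ)^2(\cdot,\BB)$.

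The main obstacle is this last identification: because $\alpha^{N,\delta}$ is a functional of the integral $M^{N,\delta}$ that it drives, standard stability of stochastic integrals does not apply directly, and the $L^2$-closeness constructed under $\QQ$ (where the argument process is $\BB$) must be transferred to closeness under $\Law(M^{N,\delta})$. I would address this intervalwise: on each $(T_l,T_{l+1}]$ the coefficient is constant and $\cF^W_{T_l}$-measurable, so uniform continuity of $a_l^{N,\delta}$ on the relevant bounded set of inputs turns the error into a contraction whose cumulative defect can be controlled as $N\to\infty$. Enforcing simultaneously continuity of $a_l^{N,\delta}$, $\sqrt{\Gamma}$-valuedness, and strict positivity $a^2\succeq\epsilon I$ is handled throughout by working in the $c=a^2$ parameterization, where all three constraints are jointly convex and compatible with convolution smoothing and convex projection.
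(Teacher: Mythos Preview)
Your strategy departs from the paper's in a way that creates a genuine gap at the identification step. The paper does not try to approximate a fixed $\QQ\in\cQ$ directly. Instead it invokes Proposition~3.5 of \cite{DNS} to obtain density in $\cQ$ of laws $\Law(\int_0^\cdot\theta\,dW)$ where $\theta$ is piecewise constant in time and a functional of the \emph{Brownian path} $W$ restricted to the past. It then approximates each such $\theta$ by processes $\alpha^{(n)}$ depending on $W$ only through finitely many values $W_{t_i^{(l)}}$, and finally uses strict positive-definiteness of the frozen coefficients to \emph{invert} the map $W\mapsto M=\int\alpha^{(n)}\,dW$ on the grid: since on each subinterval $M_{t_j^{(l)}}-M_{T_{l-1}}=\sigma_l\bigl(W_{t_1^{(l)}},\dots\bigr)\,(W_{t_j^{(l)}}-W_{T_{l-1}})$ with $\sigma_l$ invertible, one recovers $W_{t_j^{(l)}}$ as a continuous function of the $M_{t_i^{(l)}}$. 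This change of variables rewrites $\alpha^{(n)}$ as a functional of $M$, placing it in $\Gamma^W_c$, with no SDE stability argument needed.

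Your scheme instead fixes $\QQ$, builds the measurable functional $\Phi^\QQ$, approximates it in $L^2(\QQ)$ by continuous cylinder functions $a_l^{N,\delta}$, and then on a fresh space solves $dM^{N,\delta}=a_l^{N,\delta}(M^{N,\delta}_{\,\cdot})\,dW$. The difficulty you flag is real and your proposed fix does not close it. Coupling on the $\QQ$-space via $W^\QQ$, the interval-$l$ error splits as
\[
\bigl\|a_l^{N,\delta}(\hat M^{N,\delta}_{\,\cdot})-\alpha^\QQ\bigr\|
\le \bigl\|a_l^{N,\delta}(\hat M^{N,\delta}_{\,\cdot})-a_l^{N,\delta}(\BB_{\,\cdot})\bigr\|
+\bigl\|a_l^{N,\delta}(\BB_{\,\cdot})-\alpha^\QQ\bigr\|.
\]
A Gronwall bootstrap over the $N$ subintervals requires the first term to be bounded by $L_l\cdot(\text{running error})$ with $\sum_l L_l/\sqrt{N}$ bounded, where $L_l$ is a Lipschitz constant of $a_l^{N,\delta}$. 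But the $a_l^{N,\delta}$ are Lusin--mollification approximants to the merely measurable $(\Phi^\QQ)^2$, so their moduli of continuity necessarily blow up as the approximation improves; you cannot keep the propagation factor finite. Put differently, for general $\QQ\in\cQ$ the SDE $d\BB=\Phi^\QQ(t,\BB)\,dW$ carries no weak uniqueness or stability under $L^2$-perturbation of coefficients, so $\Law(M^{N,\delta})\Rightarrow\QQ$ cannot be inferred. The paper's inversion trick is precisely what sidesteps this circularity: convergence is established first for coefficients written as functionals of $W$ (where $\alpha^{(n)}\to\theta$ in $\P^W\times dt$ yields convergence of the integrals immediately), and only afterwards is the coefficient re-expressed as a functional of $M$.

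A secondary remark: you assert that Assumption~\ref{asm2.1} together with compactness of $B$ yields a uniform lower eigenvalue bound on $\Gamma$, but this is not argued and is not transparent from condition~\eqref{2.9}. The paper's proof of the lemma does not need Assumption~\ref{asm2.1}; the required strict positivity is obtained (as in \cite{DNS}) simply by convex perturbation toward $\sigma\sigma'\in\Gamma$, which is enough for the grid inversion.
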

\begin{proof}
To show that the set in \eqref{3.11+} is a subset of $\mathcal Q$, it suffices to note that, for every $(\Omega^W,\mathcal F^W,\mathbb P^W)$ and $\alpha\in\Gamma^W_c$, we have
\[
\frac{d\langle N\rangle}{dt}=\sum_{l=0}^{L} a^2_l(N_{t^{(l)}_1},...,N_{t^{(l)}_{n_l}})1_{(T_l,T_{l+1}]}, \ \  \mathbb P^W\times dt\text{-a.e.},
\]
where $N_t=\int_0^t\alpha dW$.

Let $(\Omega^W)=C_d$ and let $\Gamma^W_d$ be the set of processes $\theta$ of the form
\[
\theta(W,t)=\sum_{l=0}^L\theta_l(W_{|_{[0,T_l]}}) 1_{[T_l,T_{l+1})}(t)
\]
for some $0=T_0<\dots<T_{L+1}=1$ and some $\theta_l:C([0,T_l])\to\sqrt{\Gamma}$. As shown in the proof of Proposition~3.5. in \cite{DNS}, the convex hull of the laws associated with the stochastic integrals of $\theta\in\Gamma^W_d$ form a dense subset of $\mathcal Q$. It thus suffices to show that elements of $\Gamma^W_d$ are limits in $P^W\times dt$-measure of sequences from $\Gamma^W_c$. Indeed, using boundedness of $\sqrt{\Gamma}$, such convergence implies the convergence of the associated laws for the stochastic integrals.

Each $\theta_l$ is $\mathbb P^W$-almost sure limit of some random variables of the form $\theta_l(\mathcal W_l^{(n)}(\{W_{t^{(l)}_k}\}_{k=1,\dots,n_l}))$, where $0\le t_1^{(l)}<\dots<t_{n_l}^{(l)}\le T_l$ and $\mathcal W_l^{(n)}$ is the linear interpolation operator from the grid $0\le t_1^{(l)}<\dots<t^{(l)}_{n_l}\le T_l$ to $C([0,T_l];\mathbb R^d)$. Defining
\[
\alpha^{(n)}\set\sum_{l=0}^L \theta_l\left(\mathcal W_l^{(n)}\left(\left\{W_{t^{(l)}_k}\right\}_{k=1,\dots,n_l}\right)\right) 1_{[T_l,T_{l+1})},
\]
we see that $\alpha^{(n)}\rightarrow \theta$ $P^W\times dt$ a.e. We conclude the proof by showing that $\alpha^{(n)}$ is of the form \eqref{eq:4}--\eqref{eq:5} for every fixed $n\in\mathbb N$.

We note first that each $\alpha^{(n)}$ is of the form
\[
\alpha^{(n)}=\sum_{l=0}^L\sigma_l\left(W_{t^{(l)}_1},\dots,W_{t^{(l)}_{n_l}}\right) 1_{[T_l,T_{l+1})}
\]
for some continuous $\sqrt{\Gamma}$-valued functions $\sigma_l$. We may assume that $\{t^{(l)}_1,\dots t^{(l)}_{n_l}\}$ is a subset of $\{t^{(l+1)}_1,\dots t^{(l+1)}_{n_{l+1}}\}$ for each $l$ and that each $\sigma_l-\epsilon I$ is positive definite-valued for some $\epsilon>0$; see the beginning of the proof of Proposition 3.5 in \cite{DNS}. 

Let $M_t=\int_0^t \alpha^{(n)} dW$ be the corresponding martingale. We have $M_{t^{(l)}_{j}}=\theta_0W_{t^{l}_j}$ for $t^{(l)}_j\le T_1$ and, for $l\ge 1$ and $t^{(l)}_j\in [T_{l-1},T_{l}]$, 
\[
M_{t^{(l)}_j}=M_{T_{l-1}}+\left (W_{t^{(l)}_j}-W_{T_{l-1}}\right)\sigma_l\left(W_{t^{(l)}_1},\dots,W_{t^{(l)}_{n_{l-1}}}\right).
\]
Using continuity of each $\sigma_l$ and the fact each $\sigma_l$ is strictly positive definite-valued, we get, by an induction argument, that there are continuous functions $\phi^{(l)}_i$ such that $W_{t^{(l)}_j}=\phi^{(l)}_j(M_{t^{(l)}_1},\dots,M_{t^{(l)}_{j}})$. Therefore $\alpha^{(n)}$ are of the form
\[
\alpha^{(n)}=\sum_{l=0}^L a_l\left(M_{t^{(l)}_1},\dots,M_{t^{(l)}_{n_l}}\right) 1_{[T_l,T_{l+1})},
\]
where each $a_l$ is continuous with $a_l-\epsilon I$ positive definite-valued.
\end{proof}

\subsubsection{The multivariate Kusuoka construction}

In view of Lemma~\ref{lem:coe} and Lemma~\ref{lem3.4}, the inequality~\eqref{2.upper} follows if we show that
\begin{align}\label{eq:7}
  \liminf_{n\rightarrow\infty} V_n(F)\geq
  \mathbb{E}_{\mathbb P^W} [F( S^{(\alpha)})].
\end{align} 
for any choice of $\P^W$ and $\alpha \in \Gamma^W_c$ of the form introduced in~\eqref{eq:4} and~\eqref{eq:5}. So fix such a $\P^W$ and $\alpha$, and let us construct, for  $n=1,2,\dots$, processes $M^{(n)}$ which are close to $S^{(n)}$ in the sense of~\eqref{3.100} and which allow for an equivalent martingale measure $\QQ_n \approx \P$ on $(\Omega,\cF_n)$ such that
\begin{equation}\label{eq:16}
\Law(\mathcal{W}_n(S^{(n)}) \;|\; \QQ_n) \Rightarrow \Law(S^{(\alpha)} \;|\; \P^W).
\end{equation}

In our construction it will turn out to be convenient to have the mappings $\Psi$ and $\Phi$ introduced in the following lemma at our disposal.
\begin{lem}\label{lem:pp}
\begin{itemize}
\item[(i)] There exists a measurable mapping $\Psi:\Gamma \to B$ such that $\beta=\Psi(a)$ solves 
\[
a = \sigma \sigma'+\sigma \beta + \beta'\sigma'.
\]
\item[(ii)] There exists a measurable mapping $\Phi:B \to
  \prod_{i=1}^d [-\kappa^-_i,\kappa^+_i]$ such that
\[
v_i\beta+\Phi(\beta)\in\Pi_{j=1}^d [-\kappa_j^-,\kappa_j^+],\quad i=1,\dots,d+1,\ \beta\in B.
\]
\end{itemize}
\end{lem}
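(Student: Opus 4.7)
The plan is to handle the two parts separately: part (i) by appealing to a standard measurable selection theorem, and part (ii) by writing down an explicit continuous formula that exploits the simplex geometry of $v_1,\dots,v_{d+1}$.

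For part (i), I would consider the continuous affine map $T: B\to\Gamma$ given by $T(\beta)=\sigma\sigma'+\sigma\beta+\beta'\sigma'$. By the very definition of $\Gamma$ in~\eqref{eq:1} and the surrounding text, $T$ is surjective. Moreover, $B$ is a compact convex polytope (the image of a product of intervals $\prod_{j,k}[0,(\kappa_k^++\kappa_k^-)/(d+1)]$ under a linear map), so each fiber $T^{-1}(\{a\})\cap B$ is a nonempty closed convex subset of $B$. The existence of a Borel-measurable selector $\Psi:\Gamma\to B$ then follows from the Kuratowski--Ryll-Nardzewski measurable selection theorem. Equivalently, one may take $\Psi(a)$ to be the unique minimizer of the Frobenius norm over $T^{-1}(\{a\})\cap B$, whose Borel measurability follows from standard results on parametric strictly convex optimization.

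For part (ii), I would argue by an explicit formula. Any $\beta\in B$ is of the form $\beta_k=\sum_{j=1}^{d+1}w_{jk}v_j'$ with weights $w_{jk}\in[0,(\kappa_k^++\kappa_k^-)/(d+1)]$, and with $W_k\set\sum_{j=1}^{d+1}w_{jk}$ the identities~\eqref{2.0+}--\eqref{2.1+} yield the key formula
\[
(v_i\beta)_k=\sum_{j=1}^{d+1}w_{jk}\langle v_i,v_j\rangle=d\,w_{ik}-\sum_{j\neq i}w_{jk}=(d+1)w_{ik}-W_k.
\]
As $i$ ranges over $1,\dots,d+1$, $(v_i\beta)_k$ thus varies in an interval of width at most $(d+1)(\max_i w_{ik}-\min_i w_{ik})\le\kappa_k^++\kappa_k^-$. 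I then define
\[
\Phi(\beta)_k\set -\kappa_k^- -\min_{i=1,\dots,d+1}(v_i\beta)_k,
\]
which is continuous, hence measurable, in $\beta$. The verification is then a routine interval computation: $\sum_{i=1}^{d+1}v_i=0$ forces $\min_i(v_i\beta)_k\le 0$, giving $\Phi(\beta)_k\ge -\kappa_k^-$; and $\min_i(v_i\beta)_k\ge -W_k\ge -(\kappa_k^++\kappa_k^-)$ gives $\Phi(\beta)_k\le\kappa_k^+$. The width bound above finally ensures $v_i\beta+\Phi(\beta)\in\prod_{k=1}^d[-\kappa_k^-,\kappa_k^+]$ for every $i=1,\dots,d+1$.

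The only real subtlety lies in the measurability claim of part (i), which cannot be addressed by writing down an explicit formula for the (generally non-unique) $\beta\in B$ satisfying $a=\sigma\sigma'+\sigma\beta+\beta'\sigma'$; this is precisely where the selection theorem (or the minimum-norm construction) does the work. Part (ii) is then essentially a direct computation once the geometric identity $(v_i\beta)_k=(d+1)w_{ik}-W_k$ is in hand.
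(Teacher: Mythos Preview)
Your proof is correct. Part~(i) matches the paper's approach: both invoke a measurable selection theorem (the paper cites Theorem~6.9.7 in Bogachev, which gives a Borel right inverse for a continuous surjection between Polish spaces; your Kuratowski--Ryll-Nardzewski or minimum-norm variant achieves the same thing).

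Part~(ii) differs. The paper uses a second measurable selection: it first picks, via the same Bogachev theorem, a measurable inverse $\beta\mapsto(w_{jk}(\beta))$ of the (non-injective) linear parametrization $w\mapsto\beta$, and then sets $\Phi(\beta)_k=\sum_j w_{jk}(\beta)-\kappa_k^-$. With the identity $(v_i\beta)_k=(d+1)w_{ik}-\sum_j w_{jk}$ (which you also derive), this gives $(v_i\beta)_k+\Phi(\beta)_k=(d+1)w_{ik}(\beta)-\kappa_k^-\in[-\kappa_k^-,\kappa_k^+]$ directly. Your formula $\Phi(\beta)_k=-\kappa_k^--\min_i(v_i\beta)_k$ avoids this second selection entirely and is even continuous rather than merely measurable; the price is the extra interval bookkeeping (the width estimate and the sign argument from $\sum_i v_i=0$), but this is routine. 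Note that your bounds $\max_i(v_i\beta)_k-\min_i(v_i\beta)_k\le\kappa_k^++\kappa_k^-$ and $\min_i(v_i\beta)_k\ge -W_k$ ostensibly depend on a choice of weights $w$, but since $\beta\in B$ guarantees the existence of at least one admissible $w$, the bounds hold for the quantities $(v_i\beta)_k$ which depend only on $\beta$. So the argument is sound.
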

\begin{proof}
The mapping $\beta \to\sigma \sigma'+\sigma \beta + \beta'\sigma'$
from $B$ to $\Gamma$ is continuous, so $\Psi$ can be chosen as its
measurable inverse; see Theorem~6.9.7 in \cite{B}. 

By the same theorem, there exists a measurable subset $W$ of the
matrices with columns $w_k \in \prod_{k=1}^d
[0,(\kappa_k^++\kappa^-_k)/(d+1)]$ such that the mapping $w \mapsto
(\sum_{j=1}^{d+1} w_{jk}v_j')_{k=1,\dots,d} \in B$ is a measurable
  bijection from $W$ to $B$ with inverse $\WW(b)=(w_{jk}(b))$. Now, take $$(\Phi(\beta))_k = \sum_{j=1}^{d+1} w_{jk}(\beta)-\kappa^-_k.$$
To prove the property for $\Phi$, it suffices to note that \eqref{2.0+}--\eqref{2.2+} give
\begin{equation}\label{eq:vbeta}
(v_i\beta)_k=(d+1)w_{ik}(\beta) -\sum_{j=1}^{d+1}w_{jk}(\beta).
\end{equation}
\end{proof}

For our construction, we recall that $\alpha$ is piecewise constant as in~\eqref{eq:4}, and we let $a_l$, $l=0,\dots,L$, be the continuous functions of~\eqref{eq:5}. For each fixed $n=1,2,\dots$, we will put
\begin{align}\label{eq:8}
M^{n,i}_k \set S^{n,i}_k(1+A^{n,i}_k), \quad i=1,\dots,d, \text{ for } k=0,\dots,n,
\end{align}
for a suitably constructed process $A^{(n)}=(A^{n,1}_k,\dots,A^{n,d}_k)_{k=0,\dots,n}$. This construction is given by \eqref{eq:10}--\eqref{eq:14} below and Lemmas~\ref{lemma1}--~\ref{lemma3} prove that this construction serves its purpose.\footnote{Our construction is motivated by the arguments we used to prove~\eqref{2.lower} in the previous section which indicate which volatilities can be generated by which price systems close to $S^{(n)}$ as in~\eqref{3.100}; see, in particular, \eqref{4.13} and compare the definition of $A^{(n)}$ in~\eqref{4.essential}.}

We proceed inductively for $l=0,\dots,L$. For $l=0$, we start with
\begin{align}\label{eq:10}
A^{(n)}_0 \set 0
\end{align}
and use the first $[\sqrt{n}]$ periods to ``blend over'' in steps of size $O(1/\sqrt{n})$ from this starting value to our prescription for $A^{(n)}$ of~\eqref{eq:12} below which is chosen to induce the desired volatility structure of the scaling limit on $[0,T_1]$. Specifically, we put 
\begin{align}\label{eq:11}
A^{(n)}_k \set \frac{k}{[\sqrt{n}]} \frac{1}{\sqrt{n}} \left(\xi_k\Psi(a^2_0)+\Phi(\Psi(a^2_0))\right) \quad k=0,\dots,[\sqrt{n}],
\end{align}
and
\begin{align}\label{eq:12}
A^{(n)}_k \set \frac{1}{\sqrt{n}} \left(\xi_k\Psi(a^2_0)+\Phi(\Psi(a^2_0))\right), k=[\sqrt{n}]+1,\dots,[nT_1].
\end{align}
Along with $A^{(n)}$, $S^{(n)}$, and $M^{(n)}$ we define, as in~\eqref{4.essential}, the proxy $N^{(n)}$ for the stochastic logarithm of $M^{(n)}$ by
\begin{align}\label{eq:13}
N^{n,i}_k\set\sum_{m=1}^k\Delta N^{n,i}_m \ttext{where } \Delta N^{n,i}_m \set\frac{M^{n,i}_m-M^{n,i}_{m-1}}{S^{n,i}_{m-1}}.
\end{align}
For notational convenience we shall also define inductively $\alpha^{(n)}_l$, $l=1,\dots,L$, starting with
\begin{equation}\label{eq:15}
\alpha^{(n)}_1 \set a^2_0.
\end{equation}

Proceeding inductively for $l=1,\dots,L$, we again first ``blend over'' in $[\sqrt{n}]$ periods from the previous prescriptions for $A^{(n)}$ to the one of~\eqref{eq:14} below which will turn out to give us the desired volatility structure on $(T_l,T_{l+1}]$:
\begin{align}\label{eq:13}
A^{(n)}_k \set & \left(1-\frac{k-[nT_l]}{[\sqrt{n}]}\right) \frac{1}{\sqrt{n}} \left(\xi_k \Psi(\alpha^{(n)}_{l-1})+\Phi(\Psi(\alpha^{(n)}_{l-1}))\right)\\ \nonumber&\quad +\frac{k-[nT_l]}{[\sqrt{n}]}\frac{1}{\sqrt{n}} \left(\xi_k \Psi(\alpha^{(n)}_{l})+\Phi(\Psi(\alpha^{(n)}_{l}))\right), \\
\nonumber&\qquad k=[nT_l]+1,\dots,[nT_l]+[\sqrt{n}],
\intertext{ and }
\label{eq:14} A^{(n)}_k \set &\frac{1}{\sqrt{n}} \left(\xi_k \Psi(\alpha^{(n)}_{l})+\Phi(\Psi(\alpha^{(n)}_{l}))\right),\\
\nonumber&\qquad k=[nT_l]+[\sqrt{n}]+1,\dots,[nT_{l+1}].
\end{align}
As before we also define our proxy $N^{(n)}$ as in~\eqref{eq:13} and prepare the next inductive step by denoting
\begin{equation}\label{eq:18}
\alpha^{(n)}_{l+1}=a^2_{l+1}\left(N^{(n)}_{[nt^{(l+1)}_1]},\dots,N^{(n)}_{[n t^{(l+1)}_{n_{l+1}}]}\right).
\end{equation}
The following two lemmas motivate our construction.
\begin{lem}\label{lemma1}
For each $n=1,2,\dots$, the process $A^{(n)}$ of~\eqref{eq:10}--\eqref{eq:14} takes values in $\frac{1}{\sqrt{n}}\prod_{i=1}^d[-\kappa^-_i,\kappa^+_i]$. In particular, our choice $M^{(n)}$ of~\eqref{eq:8} is close to $S^{(n)}$ in the sense that~\eqref{3.100} holds true. 
\end{lem}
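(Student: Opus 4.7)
The plan is to verify the containment $A^{(n)}_k \in \frac{1}{\sqrt n} R$, where $R := \prod_{i=1}^d[-\kappa^-_i,\kappa^+_i]$, directly from the inductive definitions \eqref{eq:10}--\eqref{eq:14}, and then transfer this componentwise bound to $M^{n,i}_k = S^{n,i}_k(1+A^{n,i}_k)$ to recover exactly \eqref{3.100}. The essential structural feature I would exploit is that $R$ is a convex set containing the origin (since each $\kappa^\pm_i \ge 0$), so it is preserved both under scaling by factors in $[0,1]$ and under convex combinations.

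The key input is Lemma~\ref{lem:pp}(ii), which was tailored so that $v_i\beta + \Phi(\beta) \in R$ for every $\beta \in B$ and every vertex $v_i$. First I would dispose of the steady-state prescriptions \eqref{eq:12} and \eqref{eq:14}: both have the common form $A^{(n)}_k = \frac{1}{\sqrt n}\bigl(\xi_k\Psi(\alpha) + \Phi(\Psi(\alpha))\bigr)$ with $\alpha \in \{a_0^2,\alpha^{(n)}_l\}$. Since Lemma~\ref{lem:pp}(i) places $\Psi(\alpha) \in B$ and since $\xi_k$ takes values in $\{v_1,\dots,v_{d+1}\}$, the bracketed quantity lies in $R$, giving $A^{(n)}_k \in \frac{1}{\sqrt n} R$. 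Next I would handle the two blending segments: in \eqref{eq:11} the vector $A^{(n)}_k$ is a scalar multiple in $[0,1]$ of a steady-state value and so remains in $\frac{1}{\sqrt n} R$ because $0\in R$; in \eqref{eq:13} the vector $A^{(n)}_k$ is a genuine convex combination of two steady-state values with weights $1 - \frac{k-[nT_l]}{[\sqrt n]}$ and $\frac{k-[nT_l]}{[\sqrt n]}$, both in $[0,1]$ and summing to $1$, and so lies in $\frac{1}{\sqrt n} R$ by convexity.

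Having established $A^{n,i}_k \in [-\kappa^-_i/\sqrt n,\, \kappa^+_i/\sqrt n]$, I would substitute into $M^{n,i}_k = S^{n,i}_k(1+A^{n,i}_k)$ and use positivity of $S^{n,i}_k$ to obtain $(1-\kappa^-_i/\sqrt n)S^{n,i}_k \le M^{n,i}_k \le (1+\kappa^+_i/\sqrt n)S^{n,i}_k$, which is precisely \eqref{3.100}. There is no serious obstacle here: essentially all the real work has already been done in constructing $\Phi$ in Lemma~\ref{lem:pp}(ii) so that the affine image $v_i\beta + \Phi(\beta)$ lands inside the anisotropic hyperrectangle $R$. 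The only small point requiring care is to verify that the interpolation coefficients in \eqref{eq:11} and \eqref{eq:13} indeed produce convex weights — or, for \eqref{eq:11}, a scaling with $0\in R$ playing the role of the second endpoint.
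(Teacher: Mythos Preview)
Your proposal is correct and follows exactly the approach the paper intends: the paper's own proof merely says that the claim is ``straight-forward to verify'' from the properties of $\Phi$ and $\Psi$ in Lemma~\ref{lem:pp}, and your write-up supplies precisely the details behind that sentence --- the steady-state cases via Lemma~\ref{lem:pp}(ii), the blending cases via convexity of the hyperrectangle $R$ and $0\in R$, and the passage to \eqref{3.100} via positivity of $S^{n,i}_k$.
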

\begin{proof}
Using the properties of $\Phi$ and $\Psi$ from Lemma~\ref{lem:pp}, it is straight-forward to verify that $A^{(n)}$ takes values in $\frac{1}{\sqrt{n}}\prod_{i=1}^d[-\kappa^-_i,\kappa^+_i]$ $P$-a.s.. Thus the property \eqref{3.100} for $M^{(n)}$ follows directly from \eqref{eq:8}.
\end{proof}

\begin{lem}\label{lemma2}
Under Assumption~\ref{2.9}, each of the processes $M^{(n)}$, $n=1,2,\dots$, of~\eqref{eq:8} allows for a martingale measure $\QQ^n \approx \P$ on $(\Omega,\cF_n)$.
\end{lem}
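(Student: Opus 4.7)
The plan is to construct $\QQ^n\approx\P$ inductively, specifying at each step $k=0,\dots,n-1$ strictly positive $\cF_k$-measurable conditional weights $q^{(k)}_j(\omega)$ for $j=1,\dots,d+1$ that sum to $1$ and solve the one-step martingale condition
\[
\sum_{j=1}^{d+1} q^{(k)}_j\bigl(M^{n,i}_{k+1}\big|_{\xi_{k+1}=v_j}-M^{n,i}_k\bigr)=0,\qquad i=1,\dots,d.
\]
Given such weights, the density $d\QQ^n/d\P=\prod_{k=0}^{n-1}(d+1)q^{(k)}_{J_{k+1}}$, with $J_{k+1}$ the index of the realized vertex, is strictly positive and will furnish the required equivalent martingale measure.

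To handle the one-step problem, we exploit \eqref{2.1+}--\eqref{2.2+} and parametrize the probability simplex by $p\in\RR^d$ via $q^{(k)}_j=\tfrac{1}{d+1}(1+pv_j')$: normalization is then automatic, $\sum_j q^{(k)}_j v_j=p$, and strict positivity becomes the open condition $pv_j'>-1$ for every $j$. Expanding $M^{(n)}_{k+1}$ through~\eqref{2.3} and~\eqref{eq:8} and writing the (possibly blended) prescription~\eqref{eq:11}--\eqref{eq:14} compactly as $A^{(n)}_k=\tfrac{1}{\sqrt n}(\xi_k\tilde\beta_k+\tilde\Phi_k)$ with $\tilde\beta_k\in B$ (a convex combination of two $\Psi$-images during blending) and $\tilde\Phi_k$ the corresponding shift, the martingale equation reduces to the fixed-point equation
\[
p(\sigma'+\tilde\beta_{k+1})=\xi_k\tilde\beta_k+(\tilde\Phi_k-\tilde\Phi_{k+1})-\tfrac{1}{\sqrt n}R_k(p),
\]
where $R_k(p)_i=\sum_j q_j(p)(v_j\sigma')_i(v_j\tilde\beta_{k+1}+\tilde\Phi_{k+1})_i$ is the continuous, uniformly bounded correction originating from the $O(1/n)$ cross-term in the expansion of $(1+\tfrac{1}{\sqrt n}v_j\sigma'_i)(1+\tfrac{1}{\sqrt n}(v_j\tilde\beta_{k+1}+\tilde\Phi_{k+1})_i)$.

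Invertibility of $\sigma'+\tilde\beta_{k+1}$ is guaranteed by Assumption~\ref{asm2.1} together with convexity of $B$; since $\tilde\Phi_k-\tilde\Phi_{k+1}$ vanishes in the stable phase and is $O(1/\sqrt n)$ during the $[\sqrt n]$-step blending, the leading-order solution is $p^\star\approx\xi_k\tilde\beta_k(\sigma'+\tilde\beta_k)^{-1}$. Writing $\xi_k=v_i$, the key inequality~\eqref{2.9} then gives $p^\star v_j'=v_i\tilde\beta_k(\sigma'+\tilde\beta_k)^{-1}v_j'>-1$, and compactness of $B$ together with continuity of $(v_i,\beta)\mapsto v_i\beta(\sigma'+\beta)^{-1}v_j'$ upgrade this to a uniform margin $p^\star v_j'\ge -1+2\delta$ for some $\delta>0$ independent of $k$, $\omega$, and $n$.

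To upgrade $p^\star$ to a genuine solution of the full nonlinear equation, apply Brouwer's fixed-point theorem on the closed convex set $K=\{p\in\RR^d : pv_j'\ge -1+\delta,\ j=1,\dots,d+1\}$: the map sending $p$ to the right-hand side (solved for $p$ via multiplication by $(\sigma'+\tilde\beta_{k+1})^{-1}$) is continuous, and for $n$ sufficiently large it maps $K$ into $K$ by the uniform bound on $R_k$ over $K$, producing $p^{(n)}_k\in K$ and hence strictly positive $q^{(k)}_j$. The expected main obstacle is precisely this perturbation step: only the strict inequality in~\eqref{2.9}, combined with compactness of $B$, provides the uniform slack needed to absorb the $O(1/\sqrt n)$ nonlinear correction without losing interior positivity of the simplex coordinates.
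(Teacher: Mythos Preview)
Your proposal is correct and follows essentially the same approach as the paper: both reduce the one-step martingale condition to the requirement that $v_i\beta(\sigma'+\beta)^{-1}$ lie in the interior of $\conv\{v_1,\dots,v_{d+1}\}$---equivalently $v_i\beta(\sigma'+\beta)^{-1}v_j'>-1$---and then invoke Assumption~\ref{asm2.1} together with compactness of $B$ to obtain a uniform margin that absorbs the $O(1/\sqrt n)$ correction. The paper states the perturbation step more tersely as a geometric interior-of-convex-hull argument, whereas you spell it out explicitly via the simplex parametrization $q_j=\tfrac{1}{d+1}(1+pv_j')$ and Brouwer's fixed-point theorem; this is a legitimate and somewhat more detailed execution of the same idea.
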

\begin{proof}
Due to~\eqref{eq:13} it suffices to show that there is an equivalent martingale measure $\QQ_n$ for $N^{(n)}$. Using \eqref{4.6+} and our construction of $A^{(n)}$, we can write
\begin{align*}
\Delta N^{(n)}_k &= \Delta A^{(n)}_k + \frac{1}{\sqrt{n}}\xi_k \sigma'(1+ A^{(n)}_k)\\
 &= \frac{1}{\sqrt{n}} (\xi_k (\beta^{(n)}_{k}+\sigma')-\xi_{k-1} \beta^{(n)}_k)+O(1/n)
\end{align*}
for some $\cF_{k-1}$-measurable $\beta^{(n)}_k$ with values in $B$ of~\eqref{eq:1}. This reveals that, for sufficiently large $n$, we can find a conditional probability $\QQ_n[\dots \;|\; \cF_{k-1}]$ equivalent to $\P[\dots\;|\;\cF_{k-1}]$, if the support of $\xi_{k-1} \beta$ is contained in the interior of the convex hull of the support of $\xi_k (\beta+\sigma')$ uniformly in $\beta$, i.e., if there exists $\epsilon>0$ such that for any $\beta \in B$ and $w\in\mathbb R^d$ with $|w|<\epsilon$,
\[
v_i \beta \in \interior \conv(v_1(\beta+\sigma')+w,\dots,v_{d+1}(\beta+\sigma')+w)\quad\forall i=1,\dots,d+1.
\]
Using affine independence of $v_1,\dots,v_{d+1}$ and \eqref{2.0+}, it is elementary to verify that $w\in\interior\conv\{v_1,\dots,v_{d+1}\}$ if and only if
\[
\langle w,v_i\rangle>-1\quad\forall\ i=1,\dots,d+1.
\]
Thus, under Assumption~\ref{2.9}, each $A_i(\beta):=v_i\beta(\beta+\sigma')^{-1}$ is a continuous mapping from $B$ to $\interior \conv(v_1,\dots,v_{d+1})$. Using compactness of $B$ and continuity of each $A_i$, there exists $\tilde\epsilon>0$ such that $A_i(\beta)+w\in\interior \conv(v_1,\dots,v_{d+1})$ for all $i=1,\dots,d+1$, $\beta\in B$ and $w\in\mathbb R^d$ with $|w|<\tilde\epsilon$. Using compactness of $B$ again, this implies the existence of the required $\epsilon>0$. 
\end{proof}

\begin{lem}\label{lemma3}
The weak convergence~\eqref{eq:16} holds true for $M^{(n)}$ and $\QQ_n$, $n=1,2,\dots$, as in Lemma~\ref{lemma2}.
\end{lem}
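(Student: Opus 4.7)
The plan is to derive the weak convergence~\eqref{eq:16} from a functional martingale CLT for $\mathcal W_n(N^{(n)})$ under $\QQ_n$, where $N^{(n)}$ is the discrete stochastic logarithm of $M^{(n)}$ built alongside the construction, and then to transfer the conclusion to $\mathcal W_n(M^{(n)})$ via a Dol\'eans--Dade identification of $S^{(\alpha)}$ and to $\mathcal W_n(S^{(n)})$ via the proximity~\eqref{3.100}. Concretely, I would verify the three classical inputs of a functional CLT in the form of Theorem~4.3 in~\cite{DP}: (i) $N^{(n)}$ is a $\QQ_n$-martingale by Lemma~\ref{lemma2}; (ii) the identity~\eqref{4.6+} together with $|A^{(n)}|=O(1/\sqrt n)$ from Lemma~\ref{lemma1} gives $\sup_k|\Delta N^{(n)}_k|=O(1/\sqrt n)$; and (iii) the predictable quadratic variation under $\QQ_n$ converges to $\int_0^\cdot\alpha_s\alpha_s'\,ds$---the delicate step.

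For (iii), on the ``main'' portion $[nT_l]+[\sqrt n]<k\le[nT_{l+1}]$ the construction~\eqref{eq:14} gives, up to $O(1/n)$ terms,
\[
\Delta N^{(n)}_k = \frac{1}{\sqrt n}\bigl(\xi_k(\sigma'+\Psi(\alpha^{(n)}_l))-\xi_{k-1}\Psi(\alpha^{(n)}_l)\bigr),
\]
so the $\QQ_n$-martingale property forces the explicit conditional distribution
\[
\QQ_n[\xi_k=v_i\mid\cF_{k-1}]=\tfrac{1}{d+1}\bigl(1+\xi_{k-1}\Psi(\alpha^{(n)}_l)(\sigma'+\Psi(\alpha^{(n)}_l))^{-1}v_i'\bigr),
\]
whose positivity is precisely Assumption~\ref{asm2.1}. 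A direct calculation using~\eqref{2.1+}--\eqref{2.2+} together with the defining identity $\sigma\sigma'+\sigma\Psi(a)+\Psi(a)'\sigma'=a$ of Lemma~\ref{lem:pp}(i) then produces
\[
E_{\QQ_n}\bigl[\Delta N^{(n)}_k(\Delta N^{(n)}_k)'\midb\cF_{k-1}\bigr]=\tfrac{1}{n}\alpha^{(n)}_l+\tfrac{1}{n}\mathrm{Err}(\xi_{k-1};\alpha^{(n)}_l)+O(n^{-3/2}),
\]
where $\mathrm{Err}(\cdot;\alpha^{(n)}_l)$ is a bounded function of $\xi_{k-1}$ that, summed over the $\Theta(n)$ main steps of $(T_l,T_{l+1}]$, is $o_{\QQ_n}(1)$ by a conditional law-of-large-numbers argument (using that $\xi_{k-1}'\xi_{k-1}$ averages to $I$ by~\eqref{2.2+} and that third-moment contributions over the simplex average out). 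The $[\sqrt n]$-long blending sub-intervals contribute only $O(1/\sqrt n)$ to the quadratic variation and are therefore negligible.

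These three inputs drive an induction on $l=0,\dots,L$: for $l=0$, $\alpha^{(n)}_0=a_0^2$ is deterministic and the CLT yields $\mathcal W_n(N^{(n)})|_{[0,T_1]}\Rightarrow\int_0^\cdot a_0\,dW|_{[0,T_1]}$; for the inductive step, continuity of $a_{l+1}$ and convergence of the evaluation maps at the discretised nodes $[nt_j^{(l+1)}]$, together with~\eqref{eq:18}, give $\alpha^{(n)}_{l+1}\to a_{l+1}^2(N^\alpha_{t_1^{(l+1)}},\dots,N^\alpha_{t_{n_{l+1}}^{(l+1)}})$ a.s.\ on a Skorokhod-representation space, extending (i)--(iii) to $(T_{l+1},T_{l+2}]$ and closing the induction. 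Once $\mathcal W_n(N^{(n)})\Rightarrow N^\alpha:=\int_0^\cdot\alpha\,dW$ under $\QQ_n$, a discrete It\^o expansion combined with $|\Delta N^{(n)}|=O(1/\sqrt n)$ gives $\mathcal W_n(\log M^{(n)})\Rightarrow N^\alpha-\tfrac12\langle N^\alpha\rangle$, whence $\mathcal W_n(M^{(n)})\Rightarrow S^{(\alpha)}$ by continuous mapping; the proximity~\eqref{3.100} then transfers the weak limit to $\mathcal W_n(S^{(n)})$ and yields~\eqref{eq:16}. The main obstacle is the quadratic-variation computation in step (iii): one has to track the $\cF_{k-1}$-measurable perturbations in the $\QQ_n$-conditional law of $\xi_k$ (which is in general far from uniform) and verify that the second- and third-moment corrections cancel using the simplex identities~\eqref{2.0+}--\eqref{2.2+} and the defining relation for $\Psi$, leaving the clean limit $\alpha^{(n)}_l\,dt$ on each sub-interval; the joint-convergence bookkeeping through the induction is a secondary, largely routine technicality.
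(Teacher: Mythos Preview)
Your route is genuinely different from the paper's. The paper does not compute the $\QQ_n$-predictable quadratic variation at all; it simply replays the entire Section~\ref{sec4} argument for the particular $(M^{(n)},\QQ_n)$ just constructed. Lemma~\ref{lem3.2} yields tightness, Skorokhod representation produces an a.s.-convergent copy $(\hat S^{(n)},\hat M^{(n)},\hat N^{(n)},\hat X^{(n)},\hat Z^{(n)})\to(\hat M,\hat M,\hat N,\hat N,\langle\hat N\rangle)$ as in~\eqref{4.9}, and the \emph{realized} quadratic-variation formula~\eqref{4.13} expresses $\langle\hat N\rangle_t-\langle\hat N\rangle_u$ through averages of $\sqrt n\,\hat A^{(n)}_j$ over the sets $V^{n,i}$. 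Plugging in the explicit $A^{(n)}$ of~\eqref{eq:14} and using~\eqref{2.1+}--\eqref{2.2+} collapses this sum to $\sigma\Psi(\hat\alpha^{(n)}_l)$ in two lines, so $d\langle\hat N\rangle/dt=\alpha^{(n)}_l$ on $(T_l,T_{l+1})$ and the induction identifies the law of $\hat N$ with that of $\int_0^\cdot\alpha\,dW$. The dividend of this reuse is that the key ``uniformisation'' fact~\eqref{4.12} --- that the empirical frequency of each direction $v_i$ tends to $1/(d{+}1)$ under $\QQ_n$ --- is inherited from the convergence of $\ln\hat S^{(n)}$, and no $\QQ_n$-conditional moments need be computed.

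Your direct CLT approach can be made to work, but the step you flag as the main obstacle is harder than you indicate: the error $\mathrm{Err}(\xi_{k-1};\alpha^{(n)}_l)$ does \emph{not} cancel algebraically via~\eqref{2.0+}--\eqref{2.2+} and the defining relation for $\Psi$. With $\beta=\Psi(\alpha^{(n)}_l)$ one gets, up to $O(n^{-3/2})$,
\[
n\,E_{\QQ_n}\!\bigl[\Delta N^{(n)}_k(\Delta N^{(n)}_k)'\bigm|\cF_{k-1}\bigr]
=\alpha^{(n)}_l+\beta'\bigl(I-\xi_{k-1}'\xi_{k-1}\bigr)\beta+(\text{term linear in }\xi_{k-1}),
\]
so you need $\frac1n\sum_k\xi_{k-1}\to 0$ and $\frac1n\sum_k\xi_{k-1}'\xi_{k-1}\to I$ \emph{under $\QQ_n$}. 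Your justification ``$\xi_{k-1}'\xi_{k-1}$ averages to $I$ by~\eqref{2.2+}'' is not one: \eqref{2.2+} is the uniform-measure identity, whereas, as you yourself note, the conditional law $\QQ_n[\xi_k=v_i\mid\cF_{k-1}]=\frac{1}{d+1}(1+\xi_{k-1}\beta(\sigma'+\beta)^{-1}v_i')$ is far from uniform. These empirical averages \emph{can} be obtained directly --- e.g.\ $\xi_k-\xi_{k-1}\beta(\sigma'+\beta)^{-1}$ is a bounded $\QQ_n$-martingale difference, giving $\bar\xi_n\,\sigma'(\sigma'+\beta)^{-1}\to 0$ and hence $\bar\xi_n\to 0$; a second martingale-difference argument then gives $\frac1n\sum_k\xi_k'\xi_k\to I$ --- but this is a genuine probabilistic input, not a consequence of the simplex identities, and is exactly the content of~\eqref{4.12} that the paper gets for free from Section~\ref{sec4}. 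Once this is supplied, the remaining transfers $N^{(n)}\Rightarrow N^\alpha$, $M^{(n)}\Rightarrow S^{(\alpha)}$, $S^{(n)}\Rightarrow S^{(\alpha)}$ go through as you describe.
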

\begin{proof}
Using Lemma~\ref{lemma1} and Lemma~\ref{lemma2}, we can repeat the proof of \eqref{2.lower} from Lemma~\ref{lem3.2} up to \eqref{4.13}. In particular, there exists a probability space $(\hat\Omega,\hat{\mathcal F},\hat{\mathbb P})$ on which the processes $A^{(n)},N^{(n)},\dots$ have copies $\hat A^{(n)},\hat N^{(n)},\dots$,
\begin{align}\label{eq:wc}
\left(\hat{S}^{(n)},\hat{M}^{(n)},\hat{N}^{(n)},\hat{X}^{(n)},\hat{Z}^{(n)}\right)\rightarrow \left(\hat M,\hat M,\hat N,\hat N,\langle \hat N\rangle\right) \quad \hat{\mathbb P}\text{-a.s.}
\end{align}
$\hat N$ is a $\mathbb P$-martingale, and
\begin{align*}
\langle \hat N\rangle_t-\langle \hat N\rangle_u=(t-u)\left(\sigma\sigma^{'}+
\lim_{n\rightarrow\infty}\sum_{i=1}^{d+1}\sum_{j\in V^{n,i}}\frac{\sqrt n\sigma v^{'}_i \hat A^{(n)}_j+\sqrt n[\sigma v^{'}_i  \hat A^{(n)}_j]^{'}}{(d+1)|V^{n,i}|}\right),\nonumber
\end{align*}
where $V^{n,i} =\{[nu]\leq k\leq [nt]  \ \ :\ \ \hat {\xi}^{(n)}_k=v_i\}$.

Fix $l\le m$ and $T_l<u<t<T_{l+1}$. Assume from now on that $n$ is sufficiently large so that we have $[nu]> [nT_l]+[\sqrt n]$ and, in particular, $\hat A^{(n)}_k=\frac{1}{\sqrt{n}} \left(\hat\xi_k \Psi(\hat\alpha^{(n)}_{l})+\Phi(\Psi(\hat\alpha^{(n)}_{l}))\right)$ for any $[nu]\leq k\leq [nt]$. We first observe that
\begin{align*}
\sum_{i=1}^{d+1}\sum_{j\in V^{n,i}}\frac{\sqrt{n}\sigma v^{'}_i \hat A^{(n)}_j}{(d+1)|V^{n,i}|} &= \sum_{i=1}^{d+1}\sum_{j\in V^{n,i}}\frac{\sigma v^{'}_i (v_i \Psi(\hat\alpha^{(n)}_{l})+\Phi(\Psi(\hat\alpha^{(n)}_{l})))}{(d+1)|V^{n,i}|} \\
&= \sigma\Psi(\hat\alpha^{(n)}_l),
\end{align*}
where the last equality follows from \eqref{2.1+}--\eqref{2.2+}. Therefore
\begin{align*}
\langle \hat N\rangle_t-\langle \hat N\rangle_u&=(t-u)\lim_{n\rightarrow\infty}\left(\sigma\sigma^{'}+\sigma\Psi(\hat\alpha^{(n)}_l)+[\sigma\Psi(\hat\alpha^{(n)}_l)]')\right)\\
&=(t-u) \lim_{n\rightarrow\infty}\hat\alpha^{(n)}_l\\
&=(t-u)a^2_l(\hat N_{t^{(l)}_1},...,\hat N_{t^{(l)}_{k_i}}),
\end{align*}
where the second equality follows from the property of $\Psi$ in Lemma~\ref{lem:pp} and the last from \eqref{eq:18} and from the fact that $\hat N^{(n)}\rightarrow \hat N$ $\hat{\mathbb P}$-a.s. We conclude that
\[
\frac{d\langle \hat N\rangle}{dt}=a^2_l(\hat N_{t^{(l)}_1},...,\hat N_{t^{(l)}_{k_i}}), \ \ t\in (T_l, T_{l+1}).
\]
Hence, the quadratic variation of $\hat{N}$ is piecewise constant. It
follows by induction over $l=0,\dots,L$ that the law of
$(\hat{N}_t)_{0 \leq t \leq T_l}$
coincides with the law of $(\int_0^t \alpha \,dW)_{0 \leq t \leq T_l}$
under $\P^W$.
\end{proof}

\section{Illustrations and ramifications}\label{sec6}\setcounter{equation}{0}

\subsection{Sufficient condition for Assumption~\ref{asm2.1}}

The following lemma gives a sufficient condition for Assumption
\ref{asm2.1}. The condition that we give depends only on the reference
volatility matrix and on the transaction cost coefficients, but not on
the choice of $v_1,...,v_{d+1}$.
\begin{lem}\label{lem6.1}
Assumption \ref{asm2.1} is satisfied whenever $|x(\sigma^{'})^{-1}|<\frac{1}{2\sqrt d}$ for all $x\in \prod_{j=1}^d [-\kappa^-_j-\kappa_j^+,\kappa^-_j+\kappa_j^+]$.
\end{lem}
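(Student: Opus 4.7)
The plan is to reduce the two requirements in Assumption~\ref{asm2.1} (invertibility of $\sigma'+\beta$ and the inequality~\eqref{2.9}) to operator-norm estimates for the matrix $C := \beta(\sigma')^{-1}$, extracted from the hypothesis via the tight-frame structure~\eqref{2.0+}--\eqref{2.2+}.

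First I would localize the hypothesis to the relevant vectors. For any $\beta \in B$, identity~\eqref{eq:vbeta} gives
\[
(v_i\beta)_k = (d+1)w_{ik}(\beta) - \sum_{j=1}^{d+1}w_{jk}(\beta),
\]
where each $w_{jk}(\beta) \in [0,(\kappa_k^++\kappa_k^-)/(d+1)]$. The first term lies in $[0,\kappa_k^++\kappa_k^-]$ and the subtracted sum lies in the same interval, so $v_i\beta$ sits in the box $K := \prod_{k=1}^d[-(\kappa_k^-+\kappa_k^+),\kappa_k^-+\kappa_k^+]$. The standing hypothesis of the lemma therefore yields
\[
|v_i C| = |v_i\beta(\sigma')^{-1}| < \frac{1}{2\sqrt{d}}, \qquad i=1,\dots,d+1.
\]

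Second, I would upgrade this pointwise bound to the operator-norm estimate $\|C\| \leq 1/2$. The key is the tight-frame identity hidden in \eqref{2.2+}: for any row vector $u\in\mathbb{R}^d$, applying $u$ to $\frac{1}{d+1}\sum_{j=1}^{d+1}v_j'v_j = I$ yields
\[
u = \frac{1}{d+1}\sum_{j=1}^{d+1}\langle u,v_j\rangle v_j.
\]
Combining this with Cauchy--Schwarz and $|v_j|=\sqrt{d}$ gives
\[
|uC| \leq \frac{1}{d+1}\sum_{j=1}^{d+1}|\langle u,v_j\rangle|\,|v_jC| < \frac{1}{d+1}\cdot(d+1)\cdot\sqrt{d}\cdot\frac{|u|}{2\sqrt{d}} = \frac{|u|}{2}.
\]
Consequently $\|C\|\leq 1/2$, so $\sigma'+\beta = (I+C)\sigma'$ is invertible and a Neumann series estimate yields $\|(I+C)^{-1}\|\leq 2$.

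Finally, I would close the argument via
\[
v_i\beta(\sigma'+\beta)^{-1} = v_i\beta(\sigma')^{-1}(I+C)^{-1} = (v_iC)(I+C)^{-1},
\]
whence
\[
|v_i\beta(\sigma'+\beta)^{-1}v_j'| \leq |v_iC|\cdot\|(I+C)^{-1}\|\cdot|v_j'| < \frac{1}{2\sqrt{d}}\cdot 2\cdot\sqrt{d} = 1,
\]
which in particular forces $v_i\beta(\sigma'+\beta)^{-1}v_j' > -1$, i.e.\ \eqref{2.9}.

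The main conceptual step is the second one: converting the hypothesis, which only controls $|x(\sigma')^{-1}|$ for $x$ ranging over the box $K$, into a uniform operator-norm estimate on $C$. The tight-frame identity makes this conversion essentially sharp, since as $\beta$ and $i$ vary the vectors $v_i\beta$ exhaust a subset of $K$ while simultaneously forming a frame.
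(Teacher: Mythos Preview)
Your proof is correct. Both your argument and the paper's open identically: use \eqref{eq:vbeta} to place each $v_i\beta$ in the box $K$, then invoke the hypothesis to obtain $|v_i C|<\frac{1}{2\sqrt d}$ for $C=\beta(\sigma')^{-1}$. From there the routes diverge. You exploit the tight-frame identity $u=\frac{1}{d+1}\sum_j\langle u,v_j\rangle v_j$ (a consequence of \eqref{2.2+}) to promote the pointwise control to the operator-norm estimate $\|C\|\le\tfrac12$, and then finish in one stroke with $\|(I+C)^{-1}\|\le 2$. The paper instead stays at the level of inner products: from $|\langle v_i C,v_j\rangle|<\tfrac12$ it reads off $-v_iC\in\tfrac12\interior\conv(v_1,\dots,v_{d+1})$ via the half-space characterization of the simplex used in the proof of Lemma~\ref{lemma2}, then iterates by convexity to $v_i(-C)^k\in\tfrac{1}{2^k}\interior\conv(v_1,\dots,v_{d+1})$, and finally sums the Neumann series $C(I+C)^{-1}=-\sum_{k\ge 1}(-C)^k$ term by term against $v_j'$. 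Your approach is more linear-algebraic and somewhat more streamlined; the paper's is more geometric and reuses the simplex machinery already developed in Section~\ref{sec5}. Both ultimately rest on the same Neumann-series control, so the distinction is one of packaging rather than strength.
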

\begin{proof}
  Let $\beta\in B$ and denote $a=\beta(\sigma^{'})^{-1}$. From
  \eqref{eq:vbeta} we get that
  $v_i \beta\in \prod_{j=1}^d
  [-\kappa^-_j-\kappa_j^+,\kappa^-_j+\kappa_j^+]$,
  so our condition implies that for any $i$,
  $|v_i a|<\frac{1}{2\sqrt d}$. Thus, for any $i,j$, we have
  $|\langle v_i a,v_j\rangle|\leq |v_j| \frac{1}{2\sqrt d}=1/2$. This
  means that, for any $i$,
  $-v_i a\in \frac{1}{2} \interior \conv(v_1,...,v_{d+1})$ and so, for any
  $k$, $v_i (-a)^k\in\frac{1}{2^k} \interior \conv(v_1,...,v_{d+1}).$
  Thus we have
\[
v_i \beta (\beta+\sigma^{'})^{-1}v^{'}_j=v_i a (I+a)^{-1}v^{'}_j=\sum_{n=1}^\infty  v_i (-a)^n v^{'}_j>-\sum_{n=1}^\infty \frac{1}{2^n}=-1
\]
for any $i,j$.
 \end{proof}




\subsection{Assumption~\ref{asm2.1} is  essential for Theorem~\ref{thm2.1}}

Assumption~\ref{asm2.1} cannot be omitted from our main result, Theorem~\ref{thm2.1}, as demonstrated in the next example.

\begin{example}

  Let $d=2$, $\sigma=I$, $\kappa^{+}_1=\kappa^{-}_1=\kappa^{-}_2=0$
  and $\kappa^{+}_2= 3\sqrt{2}/4$. Next, let
  $v_1=(0,\sqrt 2)$, $v_2=(\frac{\sqrt 6}{2},-\frac{\sqrt 2}{2})$ and
  $v_3=(-\frac{\sqrt 6}{2},-\frac{\sqrt 2}{2})$.

Observe that $\kappa_2(v_2+v_3)=(0,-3/2)$ and so
\[
\beta \set \left[ {\begin{array}{cc}
 0 & 0 \\
 0 & -1/2 \\
\end{array} } \right]\in B
\]
violates condition~\eqref{2.9} of Assumption \ref{asm2.1}, since $v_1 \beta (\beta+\sigma^{'})^{-1}v^{'}_1=-2$.
Moreover, we have
\[
  a\set \sigma \sigma'+\sigma \beta + \beta'\sigma'= \left[ {\begin{array}{cc}
   1 & 0 \\
   0 & 0 \\
  \end{array} } \right]\in\Gamma.
\]

We will show below that, in any model which emerges as the weak
limiting point of a sequence
$\Law(\mathcal{W}_n(S^{(n)}) \;|\; \QQ_n)$ with $\QQ_n$ as in
Lemma~\ref{lem3.1}, the second asset's  volatility is uniformly
bounded away from zero. As a consequence, for a payoff
$F(S)=f(S^{(2)}_T)$ with any strictly concave function $f$, we can then
follow the arguments of Section~\ref{sec4} leading to~\eqref{4.5} to
conclude the first estimate in the following contradiction to
assertion~\eqref{eq:155} of Theorem~\ref{thm2.1}:
$$
\limsup_n V_n(F) \leq \E_{\tilde{\P}} [F(M))] <
f(s_2) \leq \sup_{\QQ \in \mathcal{Q}} \E_{Q} F(S).
$$
Here $M$ is a $\tilde{\P}$-martingale starting in $M_0=s_2$ whose
second component has volatility bounded away from zero. The second
estimate is then immediate from Jensen's inequality and the last from
the fact that $a$ belongs to $\Gamma$.

To verify the volatility bound for weak limits as discussed above,
recall the notation from Section~\ref{sec4}, in particular
\[
V^{n,1}=\{0\leq k\leq n  \ \ :\ \ \xi^{(n)}_k=v_1\}.
\]
Observe from~\eqref{4.6+} and
$A^{n,2}_k \in [-\kappa^-_2,\kappa^+_2]=[0,\kappa^+_2]$ that, for any
$k\in V^{n,1}$,
\[
\Delta N^{n,2}_k=\Delta X^{n,2}_k+\Delta A^{n,2}_k =
\frac{1}{\sqrt n} v_1^{(2)}(1+A^{n,2}_k)+\Delta A^{n,2}_k\geq \frac{1}{\sqrt n}(\sqrt 2-\kappa_2^+)= \frac{1}{\sqrt {8 n}}.
\]
Thus, by applying Theorem~4.3 in \cite{DP} and \eqref{4.12}, we get
for any $0 \leq s < t \leq T$:
\[
\langle N\rangle^{22}_t-\langle N\rangle^{22}_s=\lim_{n\rightarrow\infty}\sum_{m=[ns]}^{[nt]} [\Delta N^{n,2}_m]^2\geq (t-s)\lim_{n\rightarrow\infty}\frac{|V^{n,1}|}{8 n}=\frac{t-s}{24}.
\]
We conclude that $d\langle N\rangle^{22}/dt \geq 1/24$ as claimed.
\end{example}

\subsection{Dependence on affine basis in discrete-time reference model}

Our next example demonstrates that the limiting superhedging prices depend on the choice of $v_1,\dots,v_{d+1}$.
\begin{example}
Let $d=2$, $\sigma=I$, $\kappa_1^+=\kappa_1^-=\kappa_2^+=0$ and $0<\kappa^+_2<\frac{1}{2\sqrt 2}$. Observe that the conditions of Lemma \ref{lem6.1} are satisified so that Assumption~\ref{asm2.1} is satisfied for all choices of $v_1,\dots,v_{d+1}$. Consider the Margrabe's exchange option with the payoff  $F(S)=(S^{(1)}_T- S^{(2)}_T)^{+}$. Clearly, if
\[
a=\left[ {\begin{array}{cc}
a_{11} & a_{12} \\
a_{21} & a_{22} \\
\end{array} } \right]=Y Y^{'}.
\]
for some $\mathbb R^{2\times 2}$-matrix $Y$, then the Euclidean distance between the rows of $Y$ equals to  $\sqrt{a_{11}+a_{22}-a_{12}-a_{21}}$. Thus from Section 6 in \cite{RV} and Theorem \ref{thm2.1} it follows that
\begin{align}\label{7.1}
\lim_{n\rightarrow\infty} V_n(F)= C\left(s_1,s_2,\sup_{a\in\Gamma}\sqrt{a_{11}+a_{22}-a_{12}-a_{21}}\right),
\end{align}
where $C$ is the Black--Scholes (one dimensional) price of a call option with strike $K$, maturity $1$, interest rate $0$, volatility $\nu$, and initial stock price $s$, i.e., 
\[
C(s,K,\nu)=sN\left(\frac{\ln s-\ln K}{\nu}+\frac{\nu}{2}\right)-KN\left(\frac{\ln s-\ln K}{\nu}-\frac{\nu}{2}\right),
\]
where $N$ denotes the cumulative distribution function of the standard normal distribution.

Here $\beta=[\beta_1\ \beta_2]\in B$ satisty $\beta_1=0$ and $\beta_2=\sum_{j=1}^{d+1} w_{j2}v_j'$ for some $w_{j2}\in[0,\kappa_2^+/3]$, so $\Gamma$ is the set of all matrices of the form
\begin{align*}
a=\left[ {\begin{array}{cc}
1 & \beta^{(1)}_2 \\
\beta^{(1)}_2 & 1+2 \beta^{(2)}_2  \\
\end{array} } \right] , \ \ \beta\in B.
\end{align*}
Thus it is elementary to verify that
\[
\sup_{a\in\Gamma}\sqrt{a_{11}+a_{22}-a_{12}-a_{21}}=\sqrt{2+\frac{2\kappa_2^+}{3}\sum_{j=1}^{d+1}\max\{v_j^{(1)}-v_j^{(2)},0\}}.
\]
We conclude from \eqref{7.1} that $\lim_{n\rightarrow\infty} V_n(F)$ depends on the choice of the base.
\end{example}

\subsection{Multivariate Cox-Ross-Rubinstein yields trivial
  superreplication prices}

Instead of our choice of discrete-time reference model from~\cite{H},
a tempting alternative would be to consider $d$ independent copies of
a Cox-Ross-Rubinstein model. Let us illustrate in our final example
that scaling limits with such a reference model can be trivial even
without transaction costs.

\begin{example} 
Let $d=2$
and
\[
  \sigma=\left[ {\begin{array}{cc}
   1 & 0 \\
   0 & 1 \\
  \end{array} } \right].
\]
The alternative approach for binomial approximations would then be to
define the $n$-step model by
\begin{equation*}
\begin{split}
S^{n,i}_k=s_i \prod_{m=1}^k\left(1+
\sqrt{\frac{1}{n}}\xi^{(i)}_m\right)
\end{split}
\end{equation*}
where $\xi^{(i)}_k$, $i=1,2$, $k=1,...,n$ are i.i.d. symmetric random
variables which take on the values $\pm 1$ under $\P$.

Let us show that even without transaction costs
(i.e. $\kappa_1=\kappa_2=0$) the limit of the super--replication
prices in this model can be trivial, while the Black--Scholes price is
not. Consider, e.g., the payoff $F(S)=S^{(1)}_T\wedge
S^{(2)}_T$.
Assume for simplicity that $s_1=s_2$. Then clearly the Black--Scholes
price of this claim is equal to $\mathbb E_{\mathbb P^W}
F(S^I)<s_1$. In the alternative reference model, however, the
super-replication price can be estimated by considering the martingale
measure $\QQ_n$ under which the $\xi^{(1)}_k$s are i.i.d. and coincide with the
respective $\xi^{(2)}_k$s. Hence, we have
$S^{(1)}=S^{(2)}$ almost surely and, thus,
$$V_n\geq \mathbb E_{\mathbb Q_n} F(S)= \mathbb E_{\mathbb Q_n} S^{n,1}_n=s_1.$$
\end{example}

\bibliographystyle{spbasic}

\begin{thebibliography}{}

\bibitem{B} V.I.Bogachev,
{\em Measure theory. {V}ol. {II},}
Springer-Verlag. (2007).

\bibitem{BT}
B.~Bouchard and N.~Touzi,
{\em Explicit solution of the multivariate super-replication
problem under transaction costs,} Ann.~Appl.~Prob., {\bf 10},
685--708, (2000).

\bibitem{CPT} J.~Cvitanic, H.~Pham and N.~Touzi,
{\em A closed-form solution to the problem of superreplication under transaction costs,}
Finance and Stoch., {\bf 4}, 35--54, (1999).

\bibitem{CRR}  J.C.~Cox, A.R.~Ross and M.~Rubinstein,
{\em Option pricing: A simplified approach,}
J. Financ. Econom., {\bf 7}, 229--263, (1976).

\bibitem{D} RM.Dudley, {\em Distances of Probability Measures and Random Variables,}
Ann. Math. Statist., {\bf 39}, 1563-1572, (1968).

\bibitem{DC}
M.H.A.~Davis and J.M.C.~Clark,
{\em A note on super--replicating strategies,}
Philos. Trans. Roy. Soc. London Ser. A., {\bf 347}, 485--494, (1994).

\bibitem{DNS} Y.Dolinsky, M.Nutz and H.M.Soner,
{\em Weak Approximations of $G$--Expectations,}
Stochastic Processes and their Applications., {\bf 2}, 664--675, (2012).

\bibitem{DP} D.Duffie and P.Protter, {\em From Discrete to Continuous Time Finance: Weak Convergence of the Financial Gain Process,}
Math. Finance., {\bf 2}, 1--15, (1992).

\bibitem{DS} Y.Dolinsky and H.M.Soner,
{\em Duality and Convergence for Binomial Markets with Friction,}
Finance and Stochastics., {\bf 17}, 447--475, (2013).

\bibitem{GRS} P.~Guasoni, M.~Rasonyi and W.~Schachermayer,
{\em Consistent Price Systems and Face-Lifting Pricing under Transaction Costs,}
Ann.~Appl.~Prob., {\bf 18}, 491--520, (2008).

\bibitem{H} H.He,
{\em Convergence from discrete to continuous time
contingent claim prices,}
Rev. Financial Stud.
{\bf 3,} 523--546. (1990).

\bibitem{JK}
E.Jouini and H.Kallal,
{\em Martingales and Arbitrage in Securities Markets with Transaction Costs,}
Journal of Economic Theory.
{\bf 66,} 178--197. (1995).

\bibitem{JLR}
P.~Jakubenas, S.~Levental and M.~Ryznar,
{\em The super-replication problem via probabilistic methods,}
Ann.~Appl.~Prob., {\bf 13}, 742--773, (2003).

\bibitem{K} S.Kusuoka, {\em Limit Theorem on Option Replication Cost with Transaction Costs,} Ann. Appl. Probab. {\bf 5},
198--221, (1995).

\bibitem{LS} S.~Levental and A.V.~Skorohod,
{\em On the possibility of hedging options in the presence of transaction costs,}
Ann.~Appl.~Prob., {\bf 7}, 410-443, (1997).

\bibitem{P1} S.~Peng, {\em G--expectation, G--Brownian motion
and related stochastic calculus of It\^{o} type,} Stochastic
Analysis and Applications, volume 2 of Abel Symp.,
541--567, (2007).

\bibitem{P2} S.~Peng, {\em Multi--dimensional $G$--Brownian motion and
related stochastic calculus under $G$--expectation.,}
Stochastic Processes and Applications, {\bf 12}, 2223--2253, (2008).

\bibitem{RV}
S. Romagnoli and T. Vargiolu,
{\em Robustness of the Black-Scholes approach in the case of options on several assets,}
Finance and Stochastics. {\bf 4}, (2000).

\bibitem{SSC} H.M.~Soner, S.E.~Shreve and J.~Cvitanic,
{\em There is no nontrivial hedging portfolio for option pricing with transaction costs,}
Ann.~Appl.~Prob., {\bf 5},  327--355, (1995).


\end{thebibliography}

\end{document}